\newcommand{\setwindow}[5]{
\def\xmin{#1}%
\def\ymin{#2}%
\def\xmax{#3}%
\def\ymax{#4}%
\pstFPsub\viewingwidth{#3}{#1}%
\pstFPdiv\result{\strip@pt#5}{\viewingwidth}%
\psset{unit=\result pt}}
\newtheorem{proposition}{Proposition}
\newtheorem{lemma}{Lemma}
\newtheorem{remark}{Remark}
\def\R{{\mathbb R}}
\def\conv{{\rm conv}}
\def\conc{{\rm conc}}
\def\ie{{i.e.,} }
\def\S{{\mathcal S}}
\def\H{{\mathcal H}}
\def\C{{\mathcal C}}
\def\L{{\mathcal L}}
\def\I{{\mathcal I}}
\def\X{{\mathcal X}}
\def\K{{\mathcal K}}
\def\I{{\mathcal I}}
\def\T{{\mathcal T}}
\def\D{{\mathcal D}}
\def\P{{\mathcal P}}
\def\Q{{\mathcal Q}}
\def\01{\ensuremath{0\mathord{-}1}}
\newcommand{\bi}{\begin{list}{$\bullet$}{\setlength{\parsep}{0pt}\setlength{\itemsep}{0pt}}}
\newcounter{claim} 
\title{The circle packing problem: a theoretical comparison of various convexification techniques
\thanks{The author was partially funded by AFOSR grant FA9550-23-1-0123.}}
\author{Aida Khajavirad
\thanks{Department of Industrial and Systems Engineering,
             Lehigh University.
             E-mail: {\tt aida@lehigh.edu}.}
             }
\begin{document}

\maketitle

\begin{abstract}
We consider the problem of packing congruent circles with the maximum radius in a unit square as a mathematical optimization problem. Due to the presence of non-overlapping constraints,
this problem is a notoriously difficult nonconvex quadratically constrained optimization
problem, which possesses many local optima. We
consider several popular convexification techniques, giving rise to linear programming relaxations and semidefinite programming relaxations for the circle packing problem.
We compare the strength of these relaxations theoretically,
thereby proving the conjectures by Anstreicher~\cite{kurt09}.
Our results serve as a theoretical justification for the ineffectiveness of existing machinery for convexification of non-overlapping constraints.
\end{abstract}

\emph{Key words:} Circle packing problem, Non-overlapping constraints, Linear programming relaxations, Semidefinite programming relaxations, Boolean quadric polytope.

\section{Introduction}
The problem of finding the maximum radius $r$ of $n$ identical non-overlapping circles that fit in a unit square is a classic problem in discrete geometry.
It is well-known that this problem can be equivalently stated as:

\medskip
\emph{Locate $n$ points in a unit square, such that the minimum distance between any two points is maximal.}
\medskip\\
Denote by $(x_i, y_i)$, $i \in [n]:=\{1, \ldots, n\}$ the coordinate of the $i$th point to be located in the unit square. It then follows
that the above problem can be stated as the following optimization problem:
\begin{align}\label{problemCP}
\tag{CP}
{\rm max}   \quad & \gamma\nonumber\\
 {\rm s.t.} \quad   & (x_j-x_i)^2 + (y_j - y_i)^2 \geq \gamma,\quad 1 \leq i < j \leq n,\label{noverlap} \\
       &  x \in [0, 1]^n, \quad y \in [0, 1]^n, \nonumber
\end{align}
where $\gamma$ denotes the minimum squared pair-wise distance of the points in the unit square. The
radius $r$ of the $n$ circles that can be packed into the unit square is then given by $r = \frac{\sqrt{\gamma}}{2(1+\sqrt{\gamma})}$.
Throughout this paper, we refer to Problem~\eqref{problemCP} as the \emph{circle packing problem}. In spite of its simple formulation, the circle
packing problem is a difficult nonconvex optimization problem with a large number of locally optimal solutions.
This nonconvexity is due to the presence of \emph{non-overlapping constraints} defined by inequalities~\eqref{noverlap}. In fact, non-overlapping constraints appear in a variety
of applications  including circular cutting,
communication networks and facility layout  problems (see~\cite{cast08} for a detailed review of industrial applications).

The circle packing problem and its variants have been studied extensively by the optimization community~(see~\cite{szabo05} for a review).
Yet, we are unable to solve Problem~\eqref{problemCP} to global optimality for $n > 10$ with any of the state-of-the-art general-purpose mixed-integer nonlinear programming (MINLP) solvers~\cite{IdaNick18,mf14,VigGle16}
within a few hours of CPU-time.
This surprisingly poor performance is mainly due to the generation of weak upper bounds on the optimal value of $\gamma$, which in turn is caused by our inability to
effectively convexify a collection of non-overlapping constraints.

In this paper, we perform a theoretical study of existing techniques to convexify a nonconvex set defined by a collection of non-overlapping constraints.
We focus on the circle packing problem as a prototypical example of these optimization problems because its simple formulation allows us to solve the convex relaxations analytically and conduct a theoretical assessment of their relative strength.
%
In Section~\ref{sec: TW}, we consider several linear programming (LP) relaxations of Problem~\eqref{problemCP}
obtained by replacing each non-overlapping constraint with its convex hull over the domain of variables.
We refer to such relaxations as \emph{single-row} LP relaxations of the circle packing problem.
We prove that these relaxations are quite weak, confirming their ineffectiveness when embedded in
MINLP solvers. In Section~\ref{sec: MT}, we propose stronger LP relaxations of Problem~\eqref{problemCP} by convexifying multiple
non-overlapping constraints simultaneously. Namely, we consider a reformulation of the circle packing problem whose relaxation is closely
related to the Boolean quadric polytope (BQP)~\cite{Pad89}, a well-studied polytope in discrete optimization.
By building upon existing results on the facial structure of the BQP, we present
\emph{multi-row} LP relaxations of the circle packing problem. While our multi-row relaxations are considerably stronger than the single-row relaxations, their optimality gap grows quickly as $n$ increases, making them also ineffective in practice.
In Section~\ref{sec: sdp}, we examine the strength of SDP relaxations for the circle packing problem. We prove that the upper bounds achieved
by these relaxations are identical or worse than the bounds obtained by the proposed LP relaxations. In essence, our results provide a theoretical justification for the ineffectiveness of existing machinery for convexification of non-overlapping constraints.


\section{Single-row LP relaxations}
\label{sec: TW}

\paragraph{The basic approach.} Perhaps the most intuitive approach to obtain an LP relaxation for the circle packing problem is to
replace the nonconvex set defined by a \emph{single} non-overlapping constraint by its convex hull.
Denote by $\conv(\C)$ the convex hull of a set $\C$ and denote $\conc_{\X} f$ the concave envelope of the function $f$ over the convex set $\X$.
Then the concave envelope
of $f(x)=(x_1-x_2)^2$ over the box $\H = [0,u_1]\times [0, u_2]$ is given by:
\begin{equation}\label{concv}
\conc_{\H} f(x) = \min\big\{u_1 x_1+u_2 x_2, \; 2u_1u_2+(u_1-2u_2)x_1+(u_2-2u_1)x_2\big\}.
\end{equation}
Since non-overlapping constraints are separable in $x$ and $y$, for each $1\leq i < j \leq n$, we have
\begin{align*}
\conv &\big\{(x_i, x_j, y_i, y_j, \gamma): (x_j-x_i)^2 + (y_j - y_i)^2 \geq \gamma, \; x \in [0, 1]^2, y \in [0, 1]^2\big\} =\\
& \big\{(x_i, x_j, y_i, y_j, \gamma): \conc_{[0,1]^2} f(x) + \conc_{[0,1]^2} f(y) \geq \gamma, \;x \in [0, 1]^2, y \in [0, 1]^2\big\}.
\end{align*}
Therefore, the following LP provides an upper bound on the optimal value of Problem~\eqref{problemCP}:
\begin{align*}\label{problemTW}\tag{TW}
{\rm max}   \quad & \gamma\\
 {\rm s.t.} \quad  & \left.\begin{array}{ll}
 x_i+x_j + y_i+y_j \geq \gamma\\
	-x_i-x_j + y_i+y_j+2 \geq \gamma \\
      x_i+x_j - y_i-y_j+2 \geq \gamma\\
     -x_i-x_j - y_i-y_j+4 \geq \gamma
     \end{array}\right\},
	\quad 1 \leq i < j \leq n \\
       &  x \in [0,1]^n, y \in [0,1]^n.
\end{align*}

\begin{remark}\label{re1}
Define $X_{ij} := x_i x_j$  for all $1 \leq i \leq j \leq n$. We replace the set
$\{(x_i, x_j, X_{ij}): X_{ij} \geq x_i x_j, x \in [0,1]^2\}$ by its convex hull for all $1 \leq i < j \leq n$.
Similarly, we replace the set $\{(x_i, X_{ii}): X_{ii} \leq x^2_i, x \in [0,1]\}$ by its convex hull for all $i \in [n]$. Symmetrically, we utilize the same arguments for $y$ variables,
to obtain the following relaxation of the feasible region of Problem~\eqref{problemCP} in a lifted space:
\begin{align*}
\S = \Big\{ (x, y, X, Y, \gamma): \; & X_{ii}-2 X_{ij} + X_{jj} + Y_{ii}-2 Y_{ij} + Y_{jj} \geq \gamma, \;
X_{ij} \geq 0, \; X_{ij} \geq x_i + x_j-1, \; Y_{ij} \geq 0, \\
&   Y_{ij} \geq y_i + y_j-1, \; \forall 1 \leq i < j \leq n, \; X_{ii} \leq x_i, \;  Y_{ii} \leq y_i, \; \forall i \in [n] \Big\}.
\end{align*}
It can be checked that the projection of $\S$ onto the $(x, y, \gamma)$ space is given by the constraint set of Problem~\eqref{problemTW}. This lifted relaxation is often referred to as
the \emph{first-level Reformulation Linearization Technique (RLT) relaxation}~\cite{sa99} of the circle packing problem
and is utilized by most of the general-purpose MINLP solvers to find upper bounds for this problem.
\end{remark}

The circle packing problem is highly symmetric and utilizing~\emph{symmetry-breaking constraints}
is beneficial for solving this problem to global optimality~\cite{kurt09,costa13}.
In the following, we present sharper LP relaxations for Problem~\eqref{problemCP} by utilizing a couple of simple symmetry-breaking type constraints.

\vspace{-0.1in}
\paragraph{Tighter variable bounds.}
Let $n_x = \lceil \frac{n}{2} \rceil$ and $n_y = \lceil \frac{n_x}{2} \rceil$.
By symmetry, we can assume that at any optimal solution of Problem~\eqref{problemCP}, we have:
\begin{equation}\label{bnds}
0\leq x_i \leq \frac{1}{2}, \quad i =1,\ldots, n_x, \qquad 0\leq y_i \leq \frac{1}{2}, \quad  i =1,\ldots, n_y.
\end{equation}
Utilizing these bounds for $x$ and $y$ and
replacing each bivariate quadratic function by its concave envelope over the corresponding box, given by~\eqref{concv}, we obtain an LP relaxation, which we refer to as Problem~(TWbnd).
%
%


\vspace{-0.1in}
\paragraph{Order constraints.}
We can impose an order on $x$ variables by adding the inequalities
\begin{equation}\label{order}
0 \leq x_1 \leq x_2 \leq \cdots \leq x_n \leq 1,
\end{equation}
to Problem~\eqref{problemCP}. Subsequently, for each $1 \leq i < j \leq n$, we replace $(x_j-x_i)^2$, by its concave envelope
over the triangular region $0 \leq x_i \leq x_j \leq 1$, and we replace $(y_j-y_i)^2$, by its concave envelope over $y_i,y_j \in [0,1]$ to obtain the following LP relaxation of~\eqref{problemCP}:
\begin{align*}\label{problemTWord}\tag{TWord}
{\rm max}   \quad & \gamma\\
 {\rm s.t.} \; & \left.\begin{array}{ll}
	 x_j-x_i + y_i+y_j \geq \gamma\\
     x_j-x_i - y_i-y_j+2 \geq \gamma
	\end{array}\right\},\quad 1 \leq i < j \leq n \\
       &  0 \leq x_1 \leq \cdots \leq x_n \leq 1, \quad y \in [0,1]^n.
\end{align*}

\begin{remark}\label{re2}
Consider the first-level RLT relaxation of the feasible region of Problem~\eqref{problemCP} defined in Remark~\ref{re1}.
We can strengthen this relaxation by utilizing order constraints~\eqref{order} to generate the RLT inequalities:
\begin{eqnarray}\label{RLTord}
X_{ii} \leq X_{ij}, \quad x_i-X_{ij} \leq x_j - X_{jj}, \quad 1 \leq i < j \leq n.
\end{eqnarray}
Suppose that we add inequalities~\eqref{RLTord} to the set $\S$ defined in Remark~\ref{re1}.
Then it can be shown that the projection of this new set
onto the $(x,y,\gamma)$ space coincides with the feasible region of Problem~\eqref{problemTWord}.
\end{remark}

\vspace{-0.2in}
\paragraph{Best single-row LP relaxations.} Combining the two symmetry-breaking constraints described above leads to stronger relaxations of Problem~\eqref{problemCP}.
Consider the tigher bounds on $x$ and $y$ as given by~\eqref{bnds}. By imposing these bounds, the order constraint $x_{n_y} \leq x_{n_y+1}$ is no longer valid.
Thus, we impose the order constraints as follows:
\begin{equation}\label{semiord}
x_i \leq x_{i+1}, \quad \forall i \in [n-1] \setminus \{n_y\}.
\end{equation}
Next, we replace each quadratic term by its concave envelope over the corresponding rectangular, triangular, or trapezoidal domain to obtain an LP relaxation, which we refer to as Problem~(TWcomb).

\medskip

The next proposition provides optimal values of LP relaxations defined above. Anstreicher~\cite{kurt09} conjectured these bounds and verified them
numerically for $3 \leq n \leq 50$ (see Conjectures~4 and~5 in~\cite{kurt09}).

\begin{proposition}\label{th1}
Consider the single-row LP relaxations of the circle packing problem defined above:
\begin{itemize}
\item [(i)] The optimal value of Problem~\eqref{problemTW} is $\gamma^* = 2$ for all $n \geq 2$.

\item [(ii)] The optimal
value of Problem~(TWbnd) is $\gamma^* =\frac{1}{2}$ for all $n \geq 5$.

\item [(iii)] The optimal value of Problem~(TWord) is $\gamma^* = 1+\frac{1}{n-1}$ for all $n\geq 2$.

\item [(iv)] The the optimal value of Problem~(TWcomb) is $\gamma^* =\frac{1}{4}(1+\frac{1}{{\lfloor(n-1)/4\rfloor}})$ for all $n \geq 5$.
\end{itemize}
\end{proposition}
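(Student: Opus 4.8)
The plan is to establish, for each of the four relaxations, a matching upper and lower bound on $\gamma^\ast$: the upper bounds come from combining a small number of the defining constraints, while the lower bounds come from exhibiting explicit feasible points that attain the claimed values. Throughout I will use that $n_x=\lceil n/2\rceil$ and $n_y=\lceil n_x/2\rceil=\lceil n/4\rceil$, together with the identity $n_y-1=\lfloor (n-1)/4\rfloor$, which is exactly what links the answer in part~(iv) to $\lfloor (n-1)/4\rfloor$.

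Parts~(i) and~(iii) are the easy cases. For~(i), adding the first and fourth constraints of \eqref{problemTW} for any fixed pair gives $2\gamma\le 4$, hence $\gamma^\ast\le 2$; and the point $x_i=y_i=\tfrac12$ for all $i$ satisfies every constraint with equality at $\gamma=2$. For~(iii), averaging (i.e.\ summing) the two constraints of \eqref{problemTWord} for a pair $(i,j)$ yields $\gamma\le (x_j-x_i)+1$; summing this over the $n-1$ consecutive pairs $(i,i+1)$ telescopes to $(n-1)\gamma\le (x_n-x_1)+(n-1)\le n$, so $\gamma^\ast\le 1+\tfrac1{n-1}$. The equispaced point $x_i=\tfrac{i-1}{n-1}$ with $y_i\equiv\tfrac12$ attains this, the consecutive constraints being tight.

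For~(ii) the crucial computation is that, by \eqref{concv} with $u_1=u_2=\tfrac12$, the concave envelope of $(x_i-x_j)^2$ over $[0,\tfrac12]^2$ equals $\min\{(x_i+x_j)/2,\;\tfrac12-(x_i+x_j)/2\}$ and so never exceeds $\tfrac14$. Since $n\ge 5$ forces $n_y\ge 2$, the indices $1,2$ both have $x$- and $y$-bounds equal to $\tfrac12$, so the relaxed constraint for $(1,2)$ reads $\mathrm{env}_x+\mathrm{env}_y\ge\gamma$ with left-hand side at most $\tfrac14+\tfrac14$, giving $\gamma^\ast\le\tfrac12$. For the lower bound I set every small-bound variable to $\tfrac14$ and every large-bound variable to $\tfrac12$; a short case check on the box type shows each envelope is $\tfrac14$, $\tfrac58$, or $1$, and that both envelopes equal $\tfrac14$ simultaneously only when both indices lie in $\{1,\dots,n_y\}$ (using $n_y\le n_x$, so ``small in $y$'' forces ``small in $x$''). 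Hence every pair satisfies $\mathrm{env}_x+\mathrm{env}_y\ge\tfrac12$, with equality on the pairs inside $\{1,\dots,n_y\}$.

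Part~(iv) is where the main obstacle lies, because the order constraints \eqref{semiord} together with the bounds \eqref{bnds} produce triangular, trapezoidal, and rectangular envelope domains. The key observation that tames this is that the binding constraints live entirely on the first chain $\{1,\dots,n_y\}$, which is fully ordered (the break in \eqref{semiord} is at $n_y$) and small in both coordinates. For the upper bound I use, for $i<i+1\le n_y$, the triangular $x$-envelope $\tfrac12(x_{i+1}-x_i)$ and the box $y$-envelope $\le\tfrac14$; telescoping over $i=1,\dots,n_y-1$ gives $(n_y-1)\gamma\le\tfrac12(x_{n_y}-x_1)+\tfrac{n_y-1}{4}\le\tfrac14+\tfrac{n_y-1}{4}=\tfrac{n_y}{4}$, i.e.\ $\gamma^\ast\le\tfrac14\bigl(1+\tfrac1{n_y-1}\bigr)=\tfrac14\bigl(1+\tfrac1{\lfloor(n-1)/4\rfloor}\bigr)$. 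For the lower bound I equispace $x_i=\tfrac{i-1}{2(n_y-1)}$ and set $y_i=\tfrac14$ for $i\le n_y$, and crucially set $y_i=\tfrac12$ for all $i>n_y$: then for any pair touching an index $>n_y$ the $y$-envelope is $\tfrac58$ or $1$, already $\ge\gamma$, so feasibility reduces to the already-verified first chain, and the $x$-values on the second chain may be taken to be any order-feasible, in-bound sequence. I expect the bookkeeping of the envelope over the various domains — and confirming it is never needed off the first chain — to be the most delicate part.
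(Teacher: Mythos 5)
Your proof is correct and follows essentially the same strategy as the paper: upper bounds obtained by aggregating the envelope constraints over (consecutive) pairs lying in the small quadrant, and matching lower bounds via explicit feasible points, with your points for (ii) and (iv) differing from the paper's only in inessential choices off the first chain. One small note: in part (iv) your aggregated inequality $(n_y-1)\gamma\le\tfrac12(x_{n_y}-x_1)+\tfrac{n_y-1}{4}$, i.e. $2\gamma\le x_{i+1}-x_i+\tfrac12$ per pair, is the correct form of the telescoping step (the paper's displayed intermediate bound $\gamma\le x_j-x_i+\tfrac12$ drops a factor of two but arrives at the same final value).
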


\begin{proof}
To find the optimal value of each LP, we first find an upper bound on its objective function value by considering a specific subset of constraints and subsequently show that the upper bound is sharp by providing a feasible point that attains the same objective value.

\vspace{0.1in}
\noindent
\textbf{Part~$(i)$.} Consider the four inequality constraints of Problem~\eqref{problemTW} for some $1 \leq i < j \leq n$. Summing up these inequalities we obtain $\gamma \leq 2 := \tilde \gamma$.
Now consider the point $\tilde x_i = \tilde y_i = \frac{1}{2}$, for $i \in [n]$. Substituting $(\tilde x, \tilde y, \tilde \gamma)$ in the constraints of~\eqref{problemTW} we get
$\frac{1}{2}+ \frac{1}{2} + \frac{1}{2} + \frac{1}{2} \geq 2$, $-\frac{1}{2}-\frac{1}{2}+\frac{1}{2}+\frac{1}{2}+2 \geq 2$, $\frac{1}{2}+\frac{1}{2}-\frac{1}{2}-\frac{1}{2}+2\geq 2$, and $-\frac{1}{2}-\frac{1}{2}-\frac{1}{2}-\frac{1}{2}+4 \geq 2$. Hence, $(\tilde x, \tilde y, \tilde \gamma)$  is feasible for~\eqref{problemTW}, implying that $\gamma^*=\tilde \gamma = 2$ for all $n \geq 2$.

\vspace{0.1in}
\noindent
\textbf{Part~$(ii)$.}
Suppose that $n \geq 5$, so that $n_y \geq 2$; \ie by~\eqref{concv} there exists a pair $1 \leq i < j \leq n_y$ satisfying
\begin{equation}\label{sm}
x_i+x_j+y_i+y_j \geq 2\gamma, \; x_i+x_j-y_i-y_j+1 \geq 2\gamma, \; -x_i-x_j+y_i+y_j+1 \geq 2 \gamma, \; -x_i-x_j-y_i-y_j+2 \geq 2 \gamma.
\end{equation}
Summing up inequalities~\eqref{sm} we obtain $\gamma \leq \frac{1}{2} := \tilde \gamma$.
Consider the point $\tilde x_i = \tilde y_i = \frac{1}{4}$ for
$i \in [n_y]$ and $\tilde x_i = \tilde y_i = \frac{1}{2}$ for $n_y < i \leq n$. We claim that
$(\tilde x, \tilde y, \tilde \gamma)$ is a feasible point of~(TWbnd), implying that $\gamma^* = \frac{1}{2}$ for all $n \geq 5$. To see this, first note that this point satisfies inequalities~\eqref{sm} for $1 \leq i < j \leq n_y$. Denote by $\D, \D'$ two convex sets such that $\D' \subset \D$ and consider a function $f(z)$ defined over these sets.
Then for any $z \in \D'$ we have $\conc_{\D'} f(z) \leq \conc_{\D} f(z)$. Hence to show feasibility of $(\tilde x, \tilde y, \tilde \gamma)$, two cases remain to consider:

\begin{itemize}[leftmargin=*]
    \item for each $1 \leq i \leq n_y < j \leq n_x$,
    by~\eqref{concv}, Problem~(TWbnd) contains the following inequalities:
    $x_i+x_j+y_i+2 y_j \geq 2 \gamma, \;
	x_i+x_j-3 y_i+2 \geq 2 \gamma, \;
	-x_i-x_j+y_i+2 y_j +1 \geq 2 \gamma, \;
	-x_i-x_j-3 y_i+3 \geq 2 \gamma$. Substituting
    $(x_i,y_i)=(\frac{1}{4},\frac{1}{4})$,
     $(x_j,y_j)=(\frac{1}{2},\frac{1}{2})$, $\gamma=\tilde \gamma$ in these inequalities yields:
    $\frac{1}{4}+\frac{1}{2}+\frac{1}{4}+1 \geq 1,\;
	\frac{1}{4}+\frac{1}{2}-\frac{3}{4}+2 \geq 1,\;
	-\frac{1}{4}-\frac{1}{2}+\frac{1}{4}+1 +1 \geq 1,\;
	-\frac{1}{4}-\frac{1}{2}-\frac{3}{4}+3 \geq 1$.

    \item for each $n_y \leq i  < j \leq n_x$, by~\eqref{concv}, Problem~(TWbnd) contains the following inequalities: $x_i+x_j +2 y_i+2 y_j \geq 2 \gamma, \;
	x_i+x_j-2 y_i-2 y_j+4 \geq 2 \gamma, \;
	-x_i-x_j+2 y_i+2 y_j +1 \geq 2 \gamma, \;
	-x_i-x_j-2 y_i-2 y_j+5 \geq 2 \gamma$.
     Substituting
    $(x_i,y_i)=(\frac{1}{2},\frac{1}{2})$,
     $(x_j,y_j)=(\frac{1}{2},\frac{1}{2})$,
     $\gamma=\tilde \gamma$ in these inequalities yields:
     $\frac{1}{2}+\frac{1}{2} +1+1 \geq 1, \;
	\frac{1}{2}+\frac{1}{2}-2-2+4 \geq 1, \;
	-\frac{1}{2}-\frac{1}{2}+2+2 +1 \geq 1, \;
	-\frac{1}{2}-\frac{1}{2}-2-2+5 \geq 1$.
\end{itemize}
Hence $(\tilde x, \tilde y, \tilde \gamma)$ is a feasible point of~(TWbnd), implying that $\gamma^* = \frac{1}{2}$ for all $n \geq 5$.

\vspace{0.1in}
\noindent
\textbf{Part~$(iii)$.}  Consider a pair of inequality constraints of Problem~\eqref{problemTWord} for some $1\leq i <j \leq n$.
Summing these inequalities we obtain $\gamma \leq x_j - x_i +1$. Consider a subset of such inequalities with $j=i+1$. Then the optimal value of the following problem is an upper bound on the optimal value of Problem~\eqref{problemTWord}:
\begin{align*}
{\rm max}   \quad & \gamma \nonumber\\
 {\rm s.t.} \quad  & x_{i+1}-x_i +1\geq \gamma,\quad i\in [n] \nonumber\\
    & 0 \leq x_1 \leq \ldots \leq x_n \leq 1.\nonumber
\end{align*}
The optimal value of the above problem is attained at $\tilde x_i = \frac{i-1}{n-1}$, $i\in [n]$, and $\tilde \gamma = 1+\frac{1}{n-1}$. Therefore $\tilde \gamma$ is an upper bound on the optimal
value of Problem~\eqref{problemTWord}.
Now consider the point $\tilde x_i = \frac{i-1}{n-1}$, and $\tilde y_i = \frac{1} {2}$ for all $i\in [n]$. First note that this point satisfies order constraints on $x$ and bound constraints on $y$.
Substituting $(\tilde x, \tilde y, \tilde\gamma)$ in the remaining constraints of Problem~\eqref{problemTWord}, we obtain
$1-\frac{j-i-1}{n-1} \leq \tilde y_i + \tilde y_j \leq 1+\frac{j-i-1}{n-1}$ for all $1 \leq i < j \leq n$, which is satisfied since $\tilde y_i + \tilde y_j= 1$.
Therefore  $\gamma^* = \tilde \gamma$.

\vspace{0.1in}
\noindent
\textbf{Part~$(iv)$.} Suppose $n \geq 5$ so that $n_y \geq 2$.
Summing each pair of inequality constraints of Problem~(TWcomb) for a given $i,j$ with $1 \leq i < j \leq n_y$ we obtain $\gamma \leq x_j - x_i +\frac{1}{2}$.
Using a similar line of arguments as in Part~$(iii)$ it follows that and upper bound on the optimal value of Problem~(TWcomb) is given by
$\tilde\gamma = \frac{1}{4}(1+\frac{1}{n_y-1})$.
Consider the point $\tilde x_i = \frac{i-1}{2(n_y-1)}$, $\tilde y_i = \frac{1}{4}$ for $i \in [n_y]$,
and $\tilde x_i = \tilde y_i = \frac{1}{2}$ for $i = n_y+1,\ldots,n$. Using a similar line of basic arguments as in Part~$(ii)$ and Part~$(iii)$, one can show feasibility of
$(\tilde x, \tilde y, \tilde \gamma)$ for Problem~(TWcomb), implying $\gamma^* = \tilde \gamma$.
\end{proof}

\vspace{-0.2in}
\section{Multi-row LP relaxations}
\label{sec: MT}

In this section, we present stronger LP relaxations of the circle packing problem
by convexifying multiple non-overlapping constraints simultaneously.
We start by introducing a reformulation of Problem~\eqref{problemCP} which we will use for later developments:
\begin{align*}\label{problemCPr}\tag{CPr}
{\rm max}   \quad & \gamma\\
 {\rm s.t.}  \quad  & (x_j-x_i)^2 + (y_j - y_i)^2 \geq \beta_{ij}, \quad 1 \leq i < j \leq n, \\
       &  \beta_{ij} \geq \gamma, \quad 1 \leq i < j \leq n,\\
      &  x \in [0, 1]^n, \quad y \in [0, 1]^n.
\end{align*}
Define the sets
\begin{align}
& \P := \{(x,y, \beta, \gamma): \; (x_j-x_i)^2 + (y_j - y_i)^2 \geq \beta_{ij}, \; \forall 1 \leq i < j \leq n, \; x, y \in [0, 1]^n\}\label{rev1}\\
&\K := \{(x,y, \beta, \gamma): \beta_{ij} \geq \gamma, \; \forall \; 1 \leq i < j \leq n, \; x, y \in \R^n\}.\label{rev2}
\end{align}
Since $\conv(\P \cap \K) \subseteq \conv(\P) \cap \conv(\K)$ and $\K$ is a convex cone, we deduce that
$\conv(\P) \cap \K$ is a convex relaxation of the feasible region of Problem~\eqref{problemCPr}.
In the following, we obtain an extended formulation for the convex hull of $\P$.
To this end, let us formally define the BQP first introduced by Padberg~\cite{Pad89}:
$$
{\rm BQP}_n := \conv\big\{(x,X): X_{ij} = x_i x_j, \; \forall 1 \leq i < j \leq n, \; x \in \{0, 1\}^n\big\}.
$$
Denote by $\S_x$ the set described by all facet-defining inequalities of BQP of the form $a x + b X \leq c$ with $b_{ij} \leq 0$, for all $1 \leq i < j \leq n$. Similarly, define $\S_y$ by replacing $(x,X)$ with $(y,Y)$ in the description of $\S_x$.

\begin{lemma}\label{lem1}
Consider the set $\P$ defined by~\eqref{rev1}. Then an extended formulation for $\conv(\P)$ is given by:
\begin{align*}
    &  X_{ii} -2 X_{ij} + X_{jj} + Y_{ii} -2 Y_{ij} + Y_{jj}\geq \beta_{ij}, \; 1 \leq i < j \leq n\\
    & (x,X) \in \S_x, \; (y,Y) \in \S_y, \; X_{ii} \leq x_i, Y_{ii} \leq y_i, \; i \in [n].
\end{align*}
\end{lemma}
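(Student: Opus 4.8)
The plan is to show that $\conv(\P)$ equals the projection onto the $(x,y,\beta,\gamma)$-space of the extended system in the statement; call that projection $\mathcal{E}$, and prove $\conv(\P)=\mathcal{E}$ by reducing to a one-sided ($x$-only) version and then invoking the facial structure of ${\rm BQP}_n$. First I would exploit separability. Since $(x_j-x_i)^2+(y_j-y_i)^2$ splits into an $x$-part and a $y$-part, and $x,y$ occupy disjoint variable blocks coupled only through the linear inequalities $\beta_{ij}\le\alpha_{ij}+\alpha'_{ij}$ (with $\gamma$ free), I would rewrite $\P$ as the projection, over auxiliary $\alpha,\alpha'$, of $\{(x,y,\beta,\gamma):(x,\alpha)\in\P^x,\,(y,\alpha')\in\P^y,\,\beta\le\alpha+\alpha'\}$, where $\P^x=\{(x,\alpha):x\in[0,1]^n,\,\alpha_{ij}\le(x_j-x_i)^2\}$ and $\P^y$ is its analogue. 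A standard block-separability argument for convex hulls then gives $\conv(\P)=\{(x,y,\beta,\gamma):\exists\,\alpha,\alpha',\ (x,\alpha)\in\conv(\P^x),\,(y,\alpha')\in\conv(\P^y),\,\beta\le\alpha+\alpha'\}$. Substituting the (still-to-be-proven) extended description of $\conv(\P^x)$, doing the same for $\conv(\P^y)$, and eliminating $\alpha,\alpha'$ via $\beta_{ij}\le(X_{ii}-2X_{ij}+X_{jj})+(Y_{ii}-2Y_{ij}+Y_{jj})$ reproduces the claimed formulation. Hence it suffices to prove that $\conv(\P^x)$ is the projection of $\{(x,X,\alpha):\alpha_{ij}\le X_{ii}-2X_{ij}+X_{jj},\,(x,X)\in\S_x,\,X_{ii}\le x_i\}$.

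Next I would characterize $\conv(\P^x)$ through ${\rm BQP}_n$, using the diagonal at its concave-envelope value $X_{ii}=x_i$ (since $\conc_{[0,1]}x_i^2=x_i$). The claim is $\conv(\P^x)=\text{proj}_{(x,\alpha)}\{(x,X,\alpha):\alpha_{ij}\le x_i+x_j-2X_{ij},\,(x,X)\in{\rm BQP}_n\}$. For one inclusion, any $(x,X)\in{\rm BQP}_n$ is a convex combination $\sum_k\lambda_k(v^k,v^k(v^k)^\top)$ of integral vertices, so $x_i+x_j-2X_{ij}=\sum_k\lambda_k(v^k_j-v^k_i)^2$; distributing the slack in $\alpha$ exhibits $(x,\alpha)$ as a convex combination of the $\P^x$-points $(v^k,(v^k_j-v^k_i)^2)$, so the projection lies in $\conv(\P^x)$. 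For the reverse inclusion I would use the product lift $X_{ij}=x_ix_j$, which is the expectation under independent Bernoulli$(x_i)$ coordinates and therefore lies in ${\rm BQP}_n$; since $x_i+x_j-2x_ix_j-(x_j-x_i)^2=x_i(1-x_i)+x_j(1-x_j)\ge 0$, every point of $\P^x$ lifts into the BQP set, and convexity of the projection finishes this inclusion.

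Finally I would pass from ${\rm BQP}_n$ to $\S_x$. Each off-diagonal $X_{ij}$ enters the lifted system only through the term $-2X_{ij}$ in the upper bound on $\alpha_{ij}$, so eliminating the $X_{ij}$ is exactly forming the monotone closure $\{(x,X):\exists\,(x,X')\in{\rm BQP}_n,\,X'\le X\}$, i.e.\ the Minkowski sum of ${\rm BQP}_n$ with the nonnegative orthant in the $X$-coordinates. The supporting inequalities $ax+bX\le c$ that remain bounded on this sum are precisely those with $b\le 0$ (the box constraints on $x$ surviving as the $b=0$ facets), which is the definition of $\S_x$. Granting this, for any $(x,X)\in\S_x$ with $X_{ii}\le x_i$ there is $(x,X')\in{\rm BQP}_n$ with $X'_{ij}\le X_{ij}$, whence $\alpha_{ij}\le X_{ii}-2X_{ij}+X_{jj}\le x_i+x_j-2X'_{ij}$ and $(x,\alpha)\in\conv(\P^x)$ by the previous paragraph; the opposite containment is immediate because ${\rm BQP}_n\subseteq\S_x$. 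The main obstacle is exactly this last identification: proving that $\S_x$ describes the monotone closure requires showing that no facet of the closure is \emph{induced} (that every facet of the projected polyhedron is already a facet of ${\rm BQP}_n$ with $b\le0$), a property that fails for a generic polytope and here must be extracted from the known facet description of the Boolean quadric polytope~\cite{Pad89}. I expect the delicate bookkeeping between merely valid inequalities with $b\le 0$ and genuine $b\le 0$ facets of ${\rm BQP}_n$ to be where the proof needs the most care.
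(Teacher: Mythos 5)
Your decomposition is essentially the paper's: separate the $x$- and $y$-blocks, reduce the diagonal terms to $X_{ii}\leq x_i$ via the concave envelope of $x_i^2$ on $[0,1]$, and identify the convex hull of the off-diagonal piece $\{(x,X): X_{ij}\geq x_ix_j,\ x\in[0,1]^n\}$ with ${\rm BQP}_n$ plus the nonnegative orthant in the $X_{ij}$-coordinates. Your two-inclusion argument for that identification (vertex decomposition of a BQP point in one direction, the independent-Bernoulli product lift $X_{ij}=x_ix_j$ in the other) is correct and, if anything, cleaner than the paper's route, which instead exhibits for each lifted hypercube vertex a linear functional uniquely maximized there (using that $f(m)=km-m(m-1)/2$ is increasing on $0\le m\le k$) and then reads off the recession cone $\{(x,X): X_{ij}\geq 0\}$; both arguments land on the same Minkowski-sum description.

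The place where your proposal stops short is the step you yourself flag: you write ``granting this'' for the claim that $\S_x$ describes the monotone closure, and you do not supply the argument. The paper does not perform the ``delicate bookkeeping'' you anticipate either; having established that $\conv(\Q^b_x)$ has exactly the BQP vertices and recession cone $\{X_{ij}\geq 0\}$, it argues only that a facet-defining inequality $ax+bX\leq c$ of ${\rm BQP}_n$ is valid for (and hence remains a facet of) the closure if and only if $b\leq 0$ --- the ``only if'' by pushing $\tilde X_{ij}$ to $+\infty$ along a recession direction --- and then asserts that these are \emph{all} the facets. Your concern that the Minkowski sum of a polytope with a cone can acquire facets induced by lower-dimensional faces of the polytope is legitimate and is exactly what the paper's one-line assertion elides; indeed the bound inequalities $0\leq x_i\leq 1$ are needed to describe ${\rm BQP}_n+\{(0,X):X\geq 0\}$ yet are valid-but-not-facet-defining for ${\rm BQP}_n$ when $n\geq 2$ (they are dominated by $0\leq X_{ij}\leq x_i$), so the set of $b\leq 0$ \emph{facets} of ${\rm BQP}_n$ alone does not cut out the closure and the box constraints must be carried along, as the paper in fact does in its LP formulations. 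So: same approach as the paper, a cleaner middle section, but the final identification is left open in your write-up, and to complete it you should at minimum import the paper's vertex-plus-recession-cone characterization and then address (rather than only name) the induced-facet question for the remaining, non-box facets.
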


\begin{proof}
Consider the set
\begin{align*}
\Q := \big\{(x,y, X, Y): x^2_i \geq X_{ii}, \; y^2_i \geq Y_{ii}, \forall i \in [n],
                        X_{ij} \geq x_i x_j, \; Y_{ij} \geq y_i y_j, \forall 1 \leq i < j \leq n, \; x, y \in [0, 1]^n \big\}.
\end{align*}
Denote by $\bar \P$ the projection of $\P$ onto the space $(x, y, \beta)$. Define the linear mapping $\L: (x, y, X, Y) \rightarrow (x, y, \beta)$, where
$\beta_{ij} = X_{ii} - 2 X_{ij} + X_{jj} + Y_{ii} - 2 Y_{ij} + Y_{jj}$ for all $1 \leq i < j \leq 1$.
To characterize $\conv(\P)$, it suffices to characterize $\conv(\Q)$, as $\bar \P$ is the image of $\Q$ under the linear mapping $\L$, \ie
$\conv (\bar \P) = \conv (\L \Q) = \L \conv(\Q)$.
Now consider the set $\Q$; denote by $\bar \Q_x$ (resp. $\bar \Q_y$), the set obtained by dropping the inequalities containing $(y, Y)$ variables (resp. $(x, Y)$ variables)
from the description of $\Q$. It then follows that $\conv (\Q) = \conv (\bar \Q_x) \cap \conv (\bar \Q_y)$.
Let $\Q_x$ denote the projection of $\bar \Q_x$ onto the $(x, X)$ space and define
$$
\Q^b_x := \{(x, X): X_{ij} \geq x_i x_j, \; \forall 1 \leq i < j \leq n, \; x \in [0, 1]^n\}, \; \Q^s_x := \{(x, X): X_{ii} \leq x^2_i, \; \forall i \in [n], \; x \in [0,1]^n\}.
$$
Clearly, $\conv (\Q^s_x) = \{(x, X): X_{ii} \leq x_i, \; \forall i \in [n], \; x \in [0,1]^n\}$ and $\conv (\Q_x) \subseteq \conv (\Q^b_x) \cap \conv (\Q^s_x)$.
Moreover, $\conv (\Q_x) \supseteq \conv (\Q^b_x) \cap \conv (\Q^s_x)$, as the projection of each point in the convex hull of $\{(x_i, X_{ii}) : X_{ii} \leq x^2_i, \; x_i \in [0,1]\}$
onto the $x_i$ space can be uniquely written as a convex combination of the two end points of $[0, 1]$. It then follows that $\conv (\Q_x) = \conv (\Q^b_x) \cap \conv (\Q^s_x)$.

Now consider $\Q^b_x$. It can be shown that $\conv(\Q^b_x)$ is polyhedral. Moreover, the projection of the vertices
of $\conv(\Q^b_x)$ onto the space of $x$ variables coincides with the vertices of the unit hypercube.
To see this, let $\hat x$ denote a binary vector in $\R^n$ and let $\hat X_{ij} = \hat x_i \hat x_j$ for all $1 \leq i < j \leq n$.
To show that $(\hat x, \hat X)$ is a vertex of $\conv(\Q^b_x)$, it suffice to characterize a linear function $c^T X + d^T x$ whose maximum over $\conv(\Q^b_x)$
is uniquely attained at $(\hat x, \hat X)$. Let $k = \sum_{i=1}^n {\hat x_j}$. Define $c_{ij} = -1$ for all $1 \leq i < j \leq n$, $d_j = -1$ for all $j$ such that $\hat x_j = 0$
and $d_j = k$ for all $j$ such that $\hat x_j = 1$. Using the fact that the function $f(m) = k m - m(m-1)/2$, where $0 \leq m \leq k$ is strictly increasing, we conclude that
the unique maximizer of $c^T X + d^T x$ over $\conv(\Q^b_x)$ is given by $(\hat x, \hat X)$.
It then follows that $\conv(\Q^b_x)$ is a relaxation of the BQP. In fact, the facets of $\conv(\Q^b_x)$ are precisely
those of the BQP of the form $a x + b X \leq c$ with $b_{ij} \leq 0$, for all $1 \leq i < j \leq n$. To see this, first note that
$\conv(\Q^b_x)$ and BQP have the same set of vertices; however, while BQP is bounded, $\conv(\Q^b_x)$ is an unbounded
polyhedron whose recession cone is given by $\{(x, X): X_{ij} \geq 0, \; \forall 1 \leq i < j \leq n \}$.
It then follows that a facet-defining inequality for BQP defines a facet of $\conv(\Q^b_x)$ if and only if it is valid for
$\conv(\Q^b_x)$. To see this, suppose that $a x + b X \leq c$ defines a facet of BQP and $b_{ij} > 0$ for some $1 \leq i < j \leq n$. Consider
a point $(\tilde x, \tilde X) \in {\rm BQP}_n$.
We can construct a point in $\conv(\Q^b_x)$ by making the value of $\tilde X_{ij}$ arbitrarily large while keeping all other components unchanged,
so as to violate $a x + b X \leq c$.
Symmetrically,
we obtain a characterization of $\conv(\Q_y)$.
\end{proof}
Hence, replacing $\P \cap \K$ with $\conv(\P) \cap \K$ in Problem~\eqref{problemCPr}, and using Lemma~\ref{lem1}, we obtain a new LP relaxation for Problem~\eqref{problemCP}:
\begin{align}\label{problemMT}\tag{MT}
{\rm max}   \quad & \gamma\nonumber\\
 {\rm s.t.}  \quad  & X_{ii} -2 X_{ij} + X_{jj} + Y_{ii} -2 Y_{ij} + Y_{jj}\geq \gamma, \; 1 \leq i < j \leq n\nonumber\\
       & (x, X) \in \S_x, \; (y, Y) \in \S_y\nonumber \\
       &   X_{ii} \leq x_i, \; Y_{ii} \leq y_i, \; i \in [n]\nonumber.
\end{align}
It is well-understood that BQP has a very complex structure. In fact, an explicit description for BQP is
only available for $n \leq 6$~\cite{dl:97}, implying that a closed-form description of $\S_x$ for $n > 6$ is not available either. Next we present a relaxation of
$\S_x$; denote by $\C_x$ the polyhedron defined by all so-called \emph{clique inequalities}:
\begin{equation}\label{cliqueP}
\alpha \sum_{i=1}^m{x_i} - \sum_{1 \leq i < j \leq m} {X_{ij}} \leq \frac{\alpha (\alpha+1)}{2}, \quad \forall m \in \{3,\ldots, n\}, \; \forall 1 \leq \alpha \leq \max\{m-2,1\}.
\end{equation}
Clique inequalities induce facets of the BQP~\cite{Pad89}, and hence are facet-defining for
$\S_x$ as well.
We should remark that $\S_x = \C_x$ for $n \leq 4$ while $\S_x \subset \C_x$ for $n \geq 5$ (see Chapter 29 of~\cite{dl:97}). Replacing $\S_x$ by $\C_x$ in Problem~\eqref{problemMT},
we obtain the following LP relaxation for Problem~\eqref{problemCP}:
\begin{align}\label{problemMTcl}
\tag{$\mathrm{MT^{clique}}$}
{\rm max}   \quad & \gamma\nonumber\\
 {\rm s.t.} \quad  & X_{ii} -2 X_{ij} + X_{jj} + Y_{ii} -2 Y_{ij} + Y_{jj}\geq \gamma, \; 1 \leq i < j \leq n\label{aux3}\\
       & (x, X) \in \C_x, \; (y, Y) \in \C_y\nonumber \\
       &   X_{ii} \leq x_i, \; Y_{ii} \leq y_i, \; i \in [n]\label{aux4}\\
      & x\in [0,1]^n, \; y \in [0,1]^n \nonumber.
\end{align}
The polyhedron $\C_x$ contains exponentially many clique inequalities, and separating over these inequalities is NP-hard. Yet as we prove shortly,~\eqref{problemMTcl} gives weak upper bounds for the circle packing problem.

%

\vspace{-0.1in}
\paragraph{Tighter variable bounds.}
Using the tighter bounds on $x$ and $y$ given by~\eqref{bnds}, we obtain the following multi-row LP relaxation of Problem~\eqref{problemCP}:
\begin{align}\label{problemMTbnd}
\tag{$\mathrm{MTbnd}^{\rm clique}$}
{\rm max}   \quad & \gamma\nonumber\\
{\rm s.t.} \quad  & X_{ii} -2 X_{ij} + X_{jj} + Y_{ii} -2 Y_{ij} + Y_{jj}\geq \gamma, \; 1 \leq i < j \leq n\nonumber\\
       & (x, X) \in \C^{\rm bnd}_x, \quad (y, Y) \in \C^{\rm bnd}_y\nonumber \\
       &   X_{ii} \leq \frac{x_i}{2}, \; i \in [n_x], \quad X_{ii} \leq x_i, \; n_x+1 \leq i \leq n \label{rev6}\\
       &   Y_{ii} \leq \frac{y_i}{2}, \; i \in [n_y], \quad  Y_{ii} \leq y_i, \; n_y+1 \leq i \leq n \label{rev7}\\
    & x \in [0,1]^n, \; y \in [0,1]^n, \nonumber
\end{align}
where $\C^{\rm bnd}_x(x,X)$ can be obtained from $\C_x(\hat x, \hat X)$ defined before, via the one-to-one linear mapping:
\begin{eqnarray}\label{mapping}
\left.
\begin{array}{ll}
x_i = \frac{\hat x_i}{2},  \;  \forall \; 1\leq i\leq n_x, \quad
x_i = \hat x_i,   \; \forall \; n_x+1 \leq i\leq n \\
X_{ij} = \frac{\hat X_{ij}}{4}, \;      \forall \;  1 \leq i < j \leq n_x, \quad
X_{ij} = \frac{\hat X_{ij}}{2}, \;     \forall \;  1 \leq i \leq n_x < j \leq n, \quad
X_{ij} = \hat X_{ij},  \;      \forall \;  n_x < i < j \leq n.
\end{array}
\right.
\end{eqnarray}
The polyhedron $\C^{\rm bnd}_y$ can be constructed in a similar manner.
\vspace{-0.1in}
\paragraph{Order constraints.}
Next, we obtain a multi-row LP relaxation of Problem~\eqref{problemCP}
by incorporating the order constraints~\eqref{order}. To this end, consider the following reformulation of Problem~\eqref{problemCP}:
\begin{align*}\label{problemCPrord}
\tag{$\mathrm{CPr}_{\rm ord}$}
{\rm max}   \quad & \gamma\\
{\rm s.t.} \quad   &(x, y, \beta, \gamma) \in \P_{\rm ord} \cap \K,
\end{align*}
where $\K$ is the convex cone defined by~\eqref{rev2}, and
\begin{equation}\label{rev4}
\P_{\rm ord} := \big\{(x,y, \beta, \gamma): \;  (x_j-x_i)^2 + (y_j - y_i)^2 \geq \beta_{ij}, \; 1 \leq i < j \leq n, \; 0 \leq x_1 \leq \ldots \leq x_n \leq 1, \; y \in [0, 1]^n \big\}.
\end{equation}
We then convexify Problem~\eqref{problemCPrord} by replacing $\P_{\rm ord} \cap \K$ with the convex set $\conv(\P_{\rm ord}) \cap \K$.
The following lemma provides an extended formulation for the convex hull of $\P_{\rm ord}$.
\begin{lemma}\label{lem2}
    Consider the set $\P_{\rm ord}$ defined by~\eqref{rev4}. Then an extended formulation for $\conv(\P_{\rm ord})$
    is given by:
    \begin{align*}
    &  x_j - x_i + Y_{ii} -2 Y_{ij} + Y_{jj}\geq \beta_{ij}, \; 1 \leq i < j \leq n\\
    & (y,Y) \in \S_y, \; Y_{ii} \leq y_i, \; i \in [n], \; 0 \leq x_1 \leq \ldots \leq x_n \leq 1.
\end{align*}
\end{lemma}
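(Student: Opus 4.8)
The plan is to mirror the proof of Lemma~\ref{lem1}, exploiting the separability of the non-overlapping functions in the $x$- and $y$-variables. The $y$-block is identical to that lemma, so the only genuinely new ingredient is the treatment of the $x$-block under the order constraints. The guiding observation is that, over the order polytope $\O := \{x : 0\le x_1\le\cdots\le x_n\le 1\}$, the concave envelope of $(x_j-x_i)^2$ is the \emph{affine} function $x_j-x_i$; indeed, this affine map agrees with $(x_j-x_i)^2$ at every vertex of $\O$, because each vertex of $\O$ has the form $(0,\ldots,0,1,\ldots,1)$ and hence $x_j-x_i\in\{0,1\}$, on which squaring acts as the identity.

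First I would establish the key reduction $\conv(\P_{\rm ord}) = \conv(\widetilde\P_{\rm ord})$, where $\widetilde\P_{\rm ord}$ is obtained from~\eqref{rev4} by replacing each term $(x_j-x_i)^2$ with $x_j-x_i$. The inclusion $\conv(\P_{\rm ord})\subseteq\conv(\widetilde\P_{\rm ord})$ is immediate from $(x_j-x_i)^2\le x_j-x_i$ on $\O$. For the reverse inclusion, which is the crux, I would fix a point of $\widetilde\P_{\rm ord}$, write its $x$-component as a convex combination $x=\sum_k\lambda_k v^{(k)}$ of vertices of $\O$, and use the affine-at-vertices property to get $x_j-x_i=\sum_k\lambda_k (v^{(k)}_j-v^{(k)}_i)^2$ \emph{simultaneously for all} $1\le i<j\le n$. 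Taking the points $(v^{(k)}, y, \beta^{(k)})\in\P_{\rm ord}$, where $\beta^{(k)}_{ij}:=(v^{(k)}_j-v^{(k)}_i)^2+(y_j-y_i)^2$, and averaging with weights $\lambda_k$ then produces $(x,y,\hat\beta)\in\conv(\P_{\rm ord})$ with $\hat\beta_{ij}=(x_j-x_i)+(y_j-y_i)^2$. Since $\P_{\rm ord}$, and hence its convex hull, is closed under decreasing any $\beta_{ij}$ (the recession directions $-e_{\beta_{ij}}$) and under varying the free variable $\gamma$, every point of $\widetilde\P_{\rm ord}$, whose $\beta$ is dominated by $\hat\beta$, also lies in $\conv(\P_{\rm ord})$.

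With this reduction in hand, $\widetilde\P_{\rm ord}$ carries its only nonlinearity in the $y$-variables, so I would finish exactly as in Lemma~\ref{lem1}. I would lift only the $Y$-variables, set $\beta_{ij}= (x_j-x_i)+Y_{ii}-2Y_{ij}+Y_{jj}$ via a \emph{linear} map $\L'$, and observe that its preimage is the Cartesian product of $\O$ (no lifting needed for $x$) with the set $\Q_y$ of Lemma~\ref{lem1}. Because the convex hull of a product is the product of the convex hulls, and $\conv(\Q_y)=\{(y,Y): (y,Y)\in\S_y,\ Y_{ii}\le y_i,\ i\in[n]\}$ by Lemma~\ref{lem1}, pushing through $\L'$ (again using the $\beta$-recession directions to turn the defining equalities into the inequalities $\ge\beta_{ij}$) yields precisely the claimed extended formulation.

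The step I expect to be the main obstacle is the reverse inclusion in the key reduction: the difficulty is \emph{simultaneity}, namely exhibiting one convex decomposition of $x$ into vertices of $\O$ that certifies the envelope value for all pairs $(i,j)$ at once. This is exactly where the order constraints do their work—on $\O$ the squared differences become affine at the vertices, so a single decomposition suffices, whereas for the unordered $y$-block no such linearization holds and one is forced into the full BQP description $\S_y$. A secondary point requiring care, as in Lemma~\ref{lem1}, is the unboundedness of the sets (the free $\gamma$ and the recession directions $-e_{\beta_{ij}}$), which must be tracked so that the manipulations of convex hulls and linear images remain valid.
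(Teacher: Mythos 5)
Your proposal is correct and follows essentially the same route as the paper: both exploit the product structure to reduce the problem to convexifying the $x$-block and the $y$-block separately, reuse Lemma~\ref{lem1} for the $y$-block, and replace each $(x_j-x_i)^2$ by its concave envelope $x_j-x_i$ over the ordered region. The only (cosmetic) difference is in how the simultaneity of the pairwise envelopes is certified---you decompose $x$ globally into the $0$--$1$ staircase vertices of the order polytope, where squared differences are affine, whereas the paper invokes the uniqueness of the convex-combination representation of $(x_i,x_j)$ over each triangle $0\le x_i\le x_j\le 1$; both justifications are valid.
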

\begin{proof}
To characterize $\conv(\P_{\rm ord})$, it suffices to characterize the convex hull of the set:
$$
\Q_{{\rm ord}} := \Big\{(x, \zeta): (x_j-x_i)^2 \geq \zeta_{ij}, \; 1 \leq i < j \leq n, \; 0 \leq x_1 \leq \ldots \leq x_n \leq 1\Big\},
$$
as $\conv(\P_{\rm ord})$ is the image of $\conv(\Q_{{\rm ord}})\times \conv (\Q_y)$ under a linear mapping,
where $\times$ denotes the Cartesian product, and as before we define $\Q_y = \{(y, Y): y^2_i \geq Y_{ii},\; \forall 1 \leq i \leq  n, \; Y_{ij} \geq y_i y_j, \; \forall 1 \leq i < j \leq n, \; y\in [0,1]^n\}$. From the proof of Lemma~\ref{lem1} we have
$\conv(\Q_y) = \S_y \cap \{(y,Y): Y_{ii} \leq y_i, \forall i \in [n]\}$.
We claim that
\begin{equation}\label{convOrd}
\conv(\Q_{{\rm ord}}) = \Big\{(x, \zeta): x_j-x_i \geq \zeta_{ij}, \; 1 \leq i < j \leq n, \; 0 \leq x_1 \leq \ldots \leq x_n \leq 1\Big\}.
\end{equation}
To see this, first note that the polyhedron defined by~\eqref{convOrd} is a valid relaxation of $\Q_{{\rm ord}}$ as it is obtained by replacing the set $\Q^{i,j}_{\rm ord}:=\{(x_i, x_j, \zeta_{ij}): \;(x_j-x_i)^2 \geq \zeta_{ij},\; 0 \leq x_i \leq x_j \leq 1 \}$ with its convex hull
$\conv(\Q^{i,j}_{\rm ord}) =\{(x_i, x_j, \zeta_{ij}): \; x_j-x_i \geq \zeta_{ij}, 0 \leq x_i \leq x_j \leq 1 \}$
for all $1 \leq i < j \leq n$.
 In addition, this relaxation coincides with $\conv(\Q_{{\rm ord}})$, as
the projection of each point in  $\conv(\Q^{i,j}_{\rm ord})$ onto  the $(x_i, x_j)$
space can be uniquely determined as a convex combination of the vertices of the simplex $0 \leq x_i \leq x_j \leq 1$. 
\end{proof}

Hence, by replacing $\P_{\rm ord} \cap \K$ with $\conv(\P_{\rm ord}) \cap \K$ in Problem~\eqref{problemCPrord}, and using Lemma~\ref{lem2}, we obtain the following LP relaxation of Problem~\eqref{problemCP}:
\begin{align*}\label{problemMTord}\tag{MTord}
{\rm max}   \quad & \gamma\\
{\rm s.t.} \quad  & x_j-x_i + Y_{ii} -2 Y_{ij} + Y_{jj}\geq \gamma, \; 1 \leq i < j \leq n\\
       &  (y, Y) \in \C_y, \quad Y_{ii} \leq y_i, \; \forall i \in [n]\\
       &  0 \leq x_1 \leq \ldots \leq x_n \leq 1.
\end{align*}
Comparing Problem~\eqref{problemTWord} and Problem~\eqref{problemMTord}, we conclude that in the $x$ space, the multi-row relaxations coincide with the single-row relaxations and hence do not lead to any improvements in the relaxation quality.
We are not able to solve Problem~\eqref{problemMTord} analytically; hence we next present a relaxation of this problem that we can solve analytically. More importantly, unlike Problem~\eqref{problemMTord}, this LP relaxation can be solved efficiently in practice. Recall that clique inequalities~\eqref{cliqueP} are facet-defining for $\C_y$; denote by $\T_y$ the polyhedron defined by all clique inequalities with $m=3$ and $\alpha=1$. Notice that $\T_y$ contains $\binom{n}{3}$ inequalities. Let us now replace $\C_y$ by $\T_y$ to obtain the following relaxation of Problem~\eqref{problemMTord}:
\begin{align}\label{problemMTordtri}
\tag{$\mathrm{MTord^{tri}}$}
{\rm max}   \quad & \gamma\\
 {\rm s.t.} \quad  & x_j-x_i + Y_{ii} -2 Y_{ij} + Y_{jj}\geq \gamma, \quad 1 \leq i < j \leq n\label{aux5}\\
       &  y_i + y_j + y_k - Y_{ij} - Y_{jk} - Y_{ik} \leq 1, \quad 1 \leq i < j < k \leq n\label{aux6}\\
       &  Y_{ii} \leq y_i, \quad \forall i \in [n]\label{aux7}\\
       &  0 \leq x_1 \leq \ldots \leq x_n \leq 1, \; y \in [0,1]^n.\nonumber
\end{align}

\paragraph{Best multi-row LP relaxations.} We consider the multi-row LP relaxations of the circle packing problem by combining the
order constraints on $x$ variables~\eqref{semiord}, and tighter bounds on $x$ and $y$ variables~\eqref{bnds}.
As we did in the construction of Problem~(TWcomb), we first
replace each quadratic term $(x_j-x_i)^2$, $1\leq i < j \leq n$ by its concave envelope over the corresponding rectangular, triangular or trapezoidal domain.
Moreover, we let $(y, Y) \in \T^{\rm bnd}_y(y,Y)$,
where $\T^{\rm bnd}_y(y,Y)$ can be obtained from $\T_y(\hat y,\hat Y)$ defined above, via the one-to-one linear mapping:
\begin{eqnarray}\label{mapping2}
\left.
\begin{array}{ll}
y_i = \frac{\hat y_i}{2},  \;  \forall \; 1\leq i\leq n_y, \quad
y_i = \hat y_i,   \; \forall \; n_y+1 \leq i\leq n \\
Y_{ij} = \frac{\hat Y_{ij}}{4}, \;      \forall \;  1 \leq i < j \leq n_y, \quad
Y_{ij} = \frac{\hat Y_{ij}}{2}, \;     \forall \;  1 \leq i \leq n_y < j \leq n, \quad
Y_{ij} = \hat Y_{ij},  \;      \forall \;  n_y < i < j \leq n.
\end{array}
\right.
\end{eqnarray}
Finally we impose inequalities~\eqref{rev7} on $(y,Y)$, and refer to the resulting LP relaxation as Problem~$\rm (MTcomb^{tri})$.

\medskip

The following proposition provides optimal values of the proposed multi-row LP relaxations.

\begin{proposition}\label{th2}
Consider the multi-row LP relaxations of the circle packing problem defined above:
\begin{itemize}
\item [(i)] Let $n \geq 3$. Then the optimal value of Problem~\eqref{problemMTcl} is
$\gamma^* = 1+\frac{1}{n}$ if $n$ is odd, and is
$\gamma^* = 1+\frac{1}{n-1}$ if $n$ is even.

\item [(ii)] Let $n \geq 5$ and let $n_y = \lceil n/4\rceil$. Then the optimal value of Problem~\eqref{problemMTbnd} is
$\gamma^* = \frac{1}{4}(1+\frac{1}{n_y})$ if $n_y$ is odd, and is $\gamma^* = \frac{1}{4}(1+\frac{1}{n_y-1})$ if $n_y$ is even.

\item [(iii)] Let $n \geq 3$. Then the optimal value of Problem~\eqref{problemMTordtri} is
$\gamma^* = \frac{2}{3} (1+\frac{1}{\lfloor(n-1)/2\rfloor})$.

\item [(iv)] Let $n \geq 5$ and let $n_y = \lceil n/4\rceil$. Then the optimal value of Problem~$(\rm MTcomb^{tri})$ is
$\gamma^* = \frac{1}{2}$ for $n \leq 8$ and is
$\gamma^* = \frac{1}{6}(1+\frac{1}{\lfloor(n_y-1)/2\rfloor})$ for $n \geq 9$.
\end{itemize}
\end{proposition}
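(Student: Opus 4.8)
The plan is to prove each part by the same two-sided scheme used in the proof of Proposition~\ref{th1}: first derive an upper bound on $\gamma$ by aggregating a carefully chosen subset of the constraints together with one clique (or triangle) inequality, and then exhibit an explicit feasible point attaining that value. For part~(i), I exploit that the main constraints couple the $x$- and $y$-blocks additively and symmetrically. I would sum $X_{ii}-2X_{ij}+X_{jj}+Y_{ii}-2Y_{ij}+Y_{jj}\ge\gamma$ over all pairs inside the largest odd-cardinality index set $S$, with $|S|=m$ (so $m=n$ for odd $n$ and $m=n-1$ for even $n$). The coefficient of each diagonal $X_{ii}$ then becomes $m-1$, producing the $x$-aggregate $(m-1)\sum_{i\in S}X_{ii}-2\sum_{i<j\in S}X_{ij}$, which I bound using $X_{ii}\le x_i$ together with the clique inequality \eqref{cliqueP} on $S$ with $\alpha=(m-1)/2$ to obtain $(m^2-1)/4$; the $y$-aggregate is identical by symmetry. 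Dividing by $\binom{m}{2}$ yields $\gamma\le 1+1/m$, matching $1+\tfrac1n$ (odd) and $1+\tfrac1{n-1}$ (even). For tightness I would take the symmetric fractional point obtained by averaging the $\mathrm{BQP}_n$ vertices corresponding to balanced cuts ($k=(m\pm1)/2$ ones); the cyclic family of such cuts (as in the paper's commented construction) gives a point with $x_i$ and $X_{ij}$ constant on orbits that lies in $\C_x$ and attains the bound.

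For part~(ii) I would reduce to part~(i) via the linear mapping \eqref{mapping}. Under the tighter bounds \eqref{bnds}, rescaling the active coordinates into $[0,\tfrac12]$ multiplies each squared-distance term by $\tfrac14$ and converts $\C^{\rm bnd}_x$, $\C^{\rm bnd}_y$ and the diagonal bounds \eqref{rev6}--\eqref{rev7} back into the standard clique and diagonal constraints on the $n_y$ active variables. Hence the optimum is exactly $\tfrac14$ times the part-(i) value on $n_y$ points, giving $\tfrac14\bigl(1+\tfrac1{n_y}\bigr)$ or $\tfrac14\bigl(1+\tfrac1{n_y-1}\bigr)$ according to the parity of $n_y$; only the feasible point of part~(i) needs to be transported through the mapping.

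Parts~(iii) and~(iv) replace the clique polytope on $y$ by the triangle polytope $\T_y$ and impose order constraints on $x$. The $x$-block is now linear ($x_j-x_i$), so I would treat it exactly as in Proposition~\ref{th1}(iii)--(iv): telescoping the order-constrained inequalities contributes the factor $1+1/(\cdot)$, with the floor $\lfloor(n-1)/2\rfloor$ arising from the specific index spacing chosen in the aggregation. The genuinely new ingredient is bounding the $y$-aggregate $\sum(Y_{ii}-2Y_{ij}+Y_{jj})$ using only the $m=3$, $\alpha=1$ triangle inequalities \eqref{aux6}: for each triple, combining $Y_{ii}+Y_{jj}+Y_{kk}\le y_i+y_j+y_k$ with \eqref{aux6} shows the three associated cut-terms sum to at most $2$, so summing one such inequality per triple forces each pair to contribute on average at most $\tfrac23$. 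This is the source of the factor $\tfrac23$ in~(iii) and, combined with the $\tfrac14$ scaling of part~(ii), of the factor $\tfrac16$ in~(iv). For~(iv) the two regimes $n\le 8$ and $n\ge 9$ reflect which aggregation (the order/bound one or the triangle one) is binding, so I would compute both candidate upper bounds and keep the smaller, then match it with a scaled, ordered feasible point.

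The main obstacle will be the tightness half. For parts~(i) and~(ii) I must produce an explicit point of $\C_x$ attaining the balanced-cut value $\lfloor m^2/4\rfloor$, which requires writing a symmetric fractional solution as a convex combination of balanced-cut vertices and then verifying it satisfies \emph{every} clique inequality in \eqref{cliqueP}, not merely the one used for the upper bound; the parity of $m$ (and of $n_y$) forces a case split here. For parts~(iii) and~(iv) the delicate point is dual: I must confirm that the weak relaxation $\T_y$ is already tight, i.e. that the proposed ordered point satisfies the order constraints, meets the relevant triangle inequalities with equality, and respects the diagonal bounds simultaneously, so that no sharper clique inequality (which is present in $\C_y$ but absent from $\T_y$) would cut it off.
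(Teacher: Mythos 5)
Your upper-bound aggregations coincide with the paper's in all four parts: summing the constraints of \eqref{problemMTcl} over pairs in an odd-cardinality index set and invoking the clique inequality \eqref{cliqueP} with $\alpha=(m-1)/2$ for (i), rescaling through the mapping \eqref{mapping} for (ii), and the ``three pairs of a triple, plus one triangle inequality \eqref{aux6}, plus three diagonal bounds'' combination yielding $\gamma\le\tfrac{2}{3}(x_{l+2}-x_l+1)$ (resp.\ $\gamma\le\tfrac{1}{3}(x_{l+2}-x_l+\tfrac{1}{2})$) for (iii) and (iv). Where you genuinely diverge is the tightness certificate in part (i): the paper verifies that the symmetric point \eqref{point} satisfies \emph{every} clique inequality by maximizing a bivariate integer quadratic over $1\le\alpha\le m-2$, $3\le m\le n$ and showing the maximum is zero, whereas you propose exhibiting the point as a convex combination of balanced-cut vertices of ${\rm BQP}_n$, which places it in ${\rm BQP}_n\subseteq\C_x$ and makes all clique inequalities hold for free. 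Properly executed, this is cleaner and even repairs a defect in the paper: averaging over \emph{all} $k$-subsets with $k=\lceil n/2\rceil$ gives $x_i=k/n$, $X_{ij}=k(k-1)/(n(n-1))$, hence the value $4k(n-k)/(n(n-1))$, which equals $1+\tfrac1n$ for odd $n$ and $1+\tfrac1{n-1}$ for even $n$ (for even $n$ the paper's stated point $x_i=\tfrac12(1+\tfrac1{n-1})$, $X_{ij}=\tfrac14(1+\tfrac1{n-1})$ actually violates the clique inequality with $m=n$, $\alpha=n/2$, e.g.\ at $n=4$ it gives $\tfrac{10}{3}>3$). One correction, though: a single \emph{cyclic} orbit of one balanced cut does not produce constant $X_{ij}$ — the pair frequency depends on circular distance — so you must average over the full family of $k$-subsets (or argue by symmetrization), not the cyclic family you allude to.

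The genuine gaps are in the feasibility halves of (ii)--(iv). Transporting the part-(i) point through \eqref{mapping} only defines the coordinates indexed by $[n_y]$; you must still assign values to $x_i,y_i,X_{ij},Y_{ij}$ for indices beyond $n_y$ (the paper takes $\tfrac12$'s and exact products for the mixed entries, as in \eqref{point2}) and then check the main inequalities with $j>n_y$ together with the clique (resp.\ triangle) inequalities that mix the two index blocks — this is not automatic and is where most of the paper's work in Parts (ii) and (iv) lies. Likewise, in (iii) and (iv) the entire content of the matching lower bound is the explicit choice $\tilde Y_{ij}=\tfrac{j-i}{4}\Delta x$ (resp.\ $\tfrac{j-i}{8}\Delta x$) with constant $\tilde y_i$; your observation that the three cut-terms of a triple sum to at most $2$ correctly locates the source of the factor $\tfrac23$, but it does not by itself produce a point meeting the bound, and ``meets the relevant triangle inequalities'' remains an unverified assertion until such a point is written down. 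Finally, for (iv) with $5\le n\le 8$ (so $n_y=2$) the triangle aggregation is unavailable and the value $\tfrac12$ requires a separate argument; your plan to compare the two candidate aggregations is the right instinct, but as it stands neither you nor the paper actually proves this case.
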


\begin{proof}
To find the optimal value of each LP, we first find an upper bound on its objective function value by considering a specific subset of constraints and subsequently show that the upper bound is sharp by providing a feasible point that attains the same objective value.

\vspace{0.1in}
\noindent
    \textbf{Part~$(i)$.}
   Summing up all inequalities~\eqref{aux3} and~\eqref{aux4} we obtain:
   \begin{equation}\label{objMT}
\gamma \leq  \frac{4}{n(n-1)} \Big(\frac{(n-1)}{2} \sum_{i=1}^n{x_i} - \sum_{1 \leq i < j \leq n} { X_{ij}}+\frac{(n-1)}{2} \sum_{i=1}^n{y_i} - \sum_{1 \leq i < j \leq n} { Y_{ij}}\Big).
\end{equation}
Recall that inequalities~\eqref{cliqueP} are facet defining for $\C_x$. First, suppose that $n \geq 3$ is odd, implying that $\frac{n-1}{2}$ is an integer and $\frac{n-1}{2} \leq n-2$.
Letting $\alpha = \frac{n-1}{2}$, in~\eqref{cliqueP} yields
\begin{equation}\label{clique}
\frac{(n-1)}{2} \sum_{i \in I} {x_i}- \sum_{i, j \in I, i < j} {X_{ij}} \leq \frac{(n-1)(n+1)}{8}.
\end{equation}
From~\eqref{objMT} and~\eqref{clique}, we deduce that if $n$ is odd, an upper bound
on the optimal value of Problem~\eqref{problemMTcl} is given by $\tilde \gamma = 1+ \frac{1}{n}$ for all $n \geq 3$.
Next, let $n \geq 4$ be even, and let $I$ denote a subset of $[n]$ of cardinality $n-1$. By summing up inequalities~\eqref{aux3}
over all $i < j \in I$, and inequalities~\eqref{aux4} over all $i \in I$ and following a similar line of arguments as above, we deduce that for an even $n \geq 4$, an upper bound on on the optimal value of Problem~\eqref{problemMTcl} is given by $\tilde \gamma = 1+ \frac{1}{n-1}$.
Next we construct a feasible solution of Problem~\eqref{problemMTcl} whose objective value equals $\tilde \gamma$, implying $\gamma^* = \tilde \gamma$.
Suppose that $n$ is odd and consider the point
\begin{equation}\label{point}
\tilde x_i = \tilde X_{ii} = \tilde y_i = \tilde Y_{ii} = \frac{1}{2}\big(1+\frac{1}{n}\big), \; \forall \; i \in [n], \quad \tilde X_{ij} = \tilde Y_{ij}= \frac{1}{4}\big(1+\frac{1}{n}\big), \; \forall \; 1 \leq i < j \leq n.
\end{equation}
First note that $\tilde x_i -2 \tilde X_{ij} + \tilde x_j =\tilde y_i -2 \tilde Y_{ij} + \tilde y_j = \frac{\tilde\gamma}{2}$ for all $1 \leq i < j \leq n$.
Thus, to prove feasibility of~\eqref{point}, we need to show that $(\tilde x, \tilde X) \in \C_x$
(or equivalently $(\tilde y, \tilde Y) \in \C_y$).
To do so, it suffices to prove that the optimal value of the
following bivariate integer quadratic program is zero:
\begin{eqnarray*}
&{\rm max}   \; & -\frac{1}{8}\big(1+\frac{1}{n}\big) m^2 + \frac{1}{2} \big(1+\frac{1}{n}\big) \alpha m - \frac{1}{2} \alpha^2
+\frac{1}{8} \big(1+\frac{1}{n}\big) m -\frac{\alpha}{2}\\
& {\rm s.t.}   & 1 \leq \alpha \leq m-2, \quad  3 \leq m \leq n \\
&      & \alpha, \; m, \; {\rm integer}.
\end{eqnarray*}
It can be checked that the Hessian of the objective function is indefinite inside the feasible region, while the restriction of the quadratic function to each edge
of the domain is concave. By examining the concave univariate function over each edge, we find that the maximum
is either attained along the edge $m = n$ at $\alpha = \frac{n+1}{2}$ or $\alpha = \frac{n-1}{2}$, or is attained along the edge $\alpha = 1$ at $m = 3$
and is equal to zero. For an even $n$, we can use a similar line of arguments by considering the point $\tilde x_i = \tilde X_{ii} = \tilde y_i = \tilde Y_{ii} =\frac{1}{2}(1+\frac{1}{n-1})$ for $i \in [n]$,
and $\tilde X_{ij} = \tilde Y_{ij} = \frac{1}{4}(1+\frac{1}{n-1})$ for $1 \leq i < j \leq n$.
Thus the optimal value Problem~~\eqref{problemMTcl} is
$\gamma^* = \tilde \gamma$.

\vspace{0.1in}
\noindent
\textbf{Part~$(ii)$.} Suppose $n \geq 5$, so that $n_y \geq 2$. Consider the following constraints of Problem~\eqref{problemMTbnd}:
\begin{align*}
& X_{ii} - 2 X_{ij} + X_{jj} + Y_{ii} - 2 Y_{ij} + Y_{jj} \geq \gamma, \; 1 \leq i < j \leq n_y \\
& (x, X)_{n_y} \in \bar \C_x, \; (y, Y)_{n_y} \in \bar C_y \\
&X_{ii} \leq \frac{x_i}{2}, \; Y_{ii} \leq  \frac{y_i}{2}, \; i \in [n_y],
\end{align*}
where $(x, X)_{n_y}$ (resp. $(y, Y)_{n_y}$) consists of the components $x_i$ (resp. $y_i$) with $i \leq n_y$ and $X_{ij}$
(resp. $Y_{ij}$) with $1 \leq i < j \leq n_y$, and
$\bar C_x$ (resp. $\bar C_y$)is defined by all inequalities of $\C^{\rm bnd}_x$ (resp. $\C^{\rm bnd}_x$) containing only variables $(x,X)_{n_y}$ (resp. $(y,Y)_{n_y}$). Then using a similar line of arguments as in Part~$(i)$, we conclude that
for an odd $n$ (resp. even $n$),  an upper bound on the optimal value of Problem~\eqref{problemMTbnd} is
$\tilde\gamma = \frac{1}{4}(1+\frac{1}{n_y})$ (resp. $\tilde\gamma = \frac{1}{4}(1+\frac{1}{n_y-1})$).
Next we present a feasible point of Problem~\eqref{problemMTbnd} whose objective value is equal to $\tilde\gamma$, implying $\gamma^* = \tilde \gamma$.
Consider the point
\begin{eqnarray}\label{point2}
\begin{array}{ll}
\tilde x_i = \tilde y_i = \frac{1}{4}(1+\frac{1}{n_y}), \; i \in [n_y],\quad \tilde x_i = \hat y_i = \frac{1}{2}, \; i \in \{n_y+1, \ldots, n\}, \\
\tilde X_{ii} = \frac{\tilde x_i}{2}, \; i \in [n_x], \quad
\tilde X_{ii} = \tilde x_i, \; i \in \{n_x+1 , \ldots, n\},\\
\tilde Y_{ii} = \frac{\tilde y_i}{2}, \; i \in [n_y],
\quad \tilde Y_{ii} = \tilde y_i, \; i \in \{n_y+1 , \ldots, n\},\\
\tilde X_{ij} = \tilde Y_{ij} = \frac{1}{16} (1+\frac{1}{n_y}), \; 1 \leq i < j \leq n_y,\quad
\tilde X_{ij} = \tilde Y_{ij} = \frac{1}{8} (1+\frac{1}{n_y}), \; 1 \leq i \leq n_y < j \leq n,\\
\tilde X_{ij} = \tilde Y_{ij} = \frac{1}{4},  \; n_y < i  < j \leq n.
\end{array}
\end{eqnarray}
It can be checked that $\tilde X_{ii} - 2 \tilde X_{ij} + \tilde X_{jj} + \tilde Y_{ii} - 2 \tilde Y_{ij} + \tilde Y_{jj} = \tilde \gamma$
for all $1 \leq i < j \leq n_y$,
while $\tilde X_{ii} - 2 \tilde X_{ij} + \tilde X_{jj} + \tilde Y_{ii} - 2 \tilde Y_{ij} + \tilde Y_{jj} > \tilde \gamma$, otherwise.
In addition, we have $\tilde X_{ij} = \tilde x_i \tilde x_j$ and $\tilde Y_{ij} = \tilde y_i \tilde y_j$ for all $1 \leq i < j \leq n$ with $j > n_y$, implying that all inequalities in $\C^{\rm bnd}_x \setminus \bar \C_x$ and $\C^{\rm bnd}_y \setminus \bar C_y$ are satisfied.
Hence, to prove feasibility, it suffices to show that
$(\tilde x, \tilde X)_{n_y} \in \bar \C_x$ (or equivalently $(\tilde y, \tilde Y)_{n_y} \in \bar \C_y$).
As the point defined by~\eqref{point} belongs to $\C_x$, and $\bar \C_x$ and $(\tilde x, \tilde X)_{n_y}$ are the image of
$\C_x$ and point~\eqref{point} under the same linear mapping, respectively, it follows that $(\tilde x, \tilde X)_{n_y} \in \bar \C_x$. Hence $\gamma^* = \tilde \gamma$.

\vspace{0.1in}
\noindent
\textbf{Part~$(iii)$.} For each $l \in [n-2]$, summing up three of inequalities~\eqref{aux5} with $(i,j) \in \{(l,l+1), (l,l+2), (l+1, l+2)\}$, multiplying the resulting inequality by $\frac{1}{2}$, and then adding this inequality to inequalities~\eqref{aux6} with $(i,j,k)=(l,l+1, l+2)$, and three of inequalities~\eqref{aux7} with $i \in \{l,l+1, l+2\}$ we get
$\gamma \leq \frac{2}{3} (x_{l+2}-x_l +1)$. Hence,
the optimal value of the following problem is an upper bound on the optimal value of Problem~\eqref{problemMTord}:
\begin{eqnarray*}
&{\rm max}   \quad & \gamma\nonumber\\
& {\rm s.t.}   & \frac{2}{3} (x_{i+2}-x_i +1) \geq \gamma, \; i \in [n-2]\\
& & 0 \leq x_1 \leq \ldots \leq x_n \leq 1.
\end{eqnarray*}
Define $\Delta x:= \frac{1}{\lfloor(n-1)/2\rfloor}$. The optimal value of the above problem is attained at  $\tilde x_i = \lfloor\frac{i-1}{2}\rfloor \Delta x$ for all $i \in [n]$ and $\tilde \gamma = \frac{2}{3} (1+\Delta x)$.
We now give a feasible point of Problem~\eqref{problemMTordtri} whose objective value equals $\tilde \gamma$, implying $\gamma^* = \tilde \gamma$. Consider the point
\begin{equation}\label{pp}
\tilde x_i = \Big\lfloor\frac{i-1}{2} \Big\rfloor\Delta x, \; i \in [n], \quad \tilde y_i = \tilde Y_{ii} = \frac{1+\Delta x}{3}, \; i \in [n], \quad
\tilde Y_{ij} = \frac{j-i}{4}\Delta x,  \; 1 \leq i  < j \leq n,
\end{equation}
It can be checked that $\tilde x_j - \tilde x_i+ \tilde Y_{ii} - 2 \tilde Y_{ij} + \tilde Y_{jj} = \tilde \gamma$ for all $1 \leq i < j \leq n$.
Moreover, we have $\tilde y_i + \tilde y_j + \tilde y_k - \tilde Y_{ij} - \tilde Y_{jk} - \tilde Y_{ik} = 1 + (1-\frac{k-i}{2})\Delta x \leq 1$, where the inequality is valid
since $k - i \geq 2$. Hence, the point defined by~\eqref{pp} is feasible for Problem~\eqref{problemMTordtri}, implying that its optimal value is given by $\gamma^*= \tilde \gamma$.

\vspace{0.1in}
\noindent
\textbf{Part~$(iv)$.} Suppose $n \geq 9$,
so that $n_y \geq 3$. Consider the following constraints of Problem~$\rm (MTcomb^{tri})$:
\begin{eqnarray}
&&\frac{x_j-x_i}{2} + Y_{ii} - 2 Y_{ij} + Y_{jj} \geq \gamma, \quad 1 \leq i < j \leq n_y,\label{rr1}\\
&& \frac{y_i}{2} + \frac{y_j}{2} + \frac{y_k}{2} - Y_{ij} -  Y_{ik} - Y_{jk} \leq \frac{1}{4}, \quad 1 \leq i < j < k \leq n_y\label{rr2}\\
&&Y_{ii} \leq \frac{y_i}{2}, \quad i \in [n_y]\label{rr3}\\
&&0 \leq x_1 \leq \ldots \leq x_{n_y} \leq \frac{1}{2}.\nonumber
\end{eqnarray}
For each $l \in [n_y-2]$, summing three of inequalities~\eqref{rr1} with $(i,j) \in \{(l,l+1), (l,l+2),(l+1,l+2)\}$ and three of inequalities~\eqref{rr3} with $i \in \{l,l+1, l+2\}$, multiplying the resulting inequality by $\frac{1}{2}$ and then adding it to inequality~\eqref{rr2} with $(i,j,k)=(l,l+1,l+2)$, we get $\gamma \leq \frac{1}{3}(x_{l+2}-x_l+\frac{1}{2})$. Using a similar line of arguments as in Part~$(iii)$, we deduce that an upper bound on the optimal value of Problem~$\rm (MTcomb^{tri})$ is given by
$\tilde \gamma = \frac{1}{6}(1+\frac{1}{\lfloor(n_y-1)/2\rfloor})$. We now present a feasible point of Problem~$\rm (MTcomb^{tri})$ whose objective value is $\tilde \gamma$ implying that $\gamma^* = \tilde \gamma$. Define $\Delta x := \frac{1}{2\lfloor(n_y-1)/2\rfloor}$. Consider the point:
\begin{eqnarray}\label{ppp}
\begin{array}{ll}
\tilde x_i = \lfloor\frac{i-1}{2}\rfloor \Delta x, \; i \in [n_y], \quad \tilde y_i = \frac{1+2 \Delta x}{6}, \; i \in [n_y], \quad \tilde x_i = \tilde y_i = \frac{1}{2}, \; i = n_y+1, \ldots, n\\
\tilde Y_{ii} = \frac{\tilde y_i}{2}, \; i \in [n_y], \quad
\tilde Y_{ii} = \tilde y_i, \; i = n_y+1, \ldots, n\\
\tilde Y_{ij} = \frac{j-i}{8}\Delta x,  \; 1 \leq i  < j \leq n_y, \quad \tilde Y_{ij} = \frac{1+ 2 \Delta x}{12}, \; 1 \leq i \leq n_y < j \leq n, \quad\tilde Y_{ij} = \frac{1}{4}, \; n_y < i < j \leq n.
\end{array}
\end{eqnarray}
First note that $\tilde x$ satisfy the order and bounds constraints on $x$ variables and $\tilde y$ satisfy the bound constraints on $y$ variables. Inequalities~\eqref{rev7} are also clearly satisfied.
Moreover, it can be checked that all
inequalities~\eqref{rr1} are satisfied tightly. The remaining inequalities of the form~\eqref{rr1} for $j > n_y$ can be written as
\begin{equation}\label{remain}
\ell_{\D}(x_i, x_j)+Y_{ii}-2Y_{ij}+Y_{jj} \geq \gamma, \quad \forall 1\leq i < j \leq n \; {\rm such \; that} \; j > n_y,
\end{equation}
where $\ell_{\D}(x_i, x_j)$ denotes a facet of the concave envelope of $(x_j-x_i)^2$ over domain $\D$.
Since $0 \leq (x_j-x_i)^2 \leq \ell_{\D}(x_i, x_j)$ for all $x_i,x_j \in \D$, to show that point~\eqref{ppp} satisfies inequalities~\eqref{remain}, it suffices to show that
$Y_{ii}-2Y_{ij}+Y_{jj} \geq \tilde \gamma$ for all $1\leq i < j \leq$  with $j > n_y$. Two cases arise:
\begin{itemize}[leftmargin=*]
\item $n_y < i < j \leq n$: in this case we have
$\tilde Y_{ii}-2 \tilde Y_{ij}+\tilde Y_{jj} = \frac{1}{2}-2(\frac{1}{4})+\frac{1}{2}=\frac{1}{2} \geq \tilde \gamma$, where the inequality follows since $n_y \geq 3$, implying that $\tilde \gamma \leq \frac{1}{3}$.

\item $1 \leq i \leq n_y < j \leq n$:
in this case we have $\tilde Y_{ii}-2 \tilde Y_{ij}+\tilde Y_{jj} = \frac{1+2\Delta x}{12}-2(\frac{1+2\Delta x}{12})+\frac{1}{2} \geq \frac{1}{6}(1+2\Delta x) = \tilde \gamma$, where the inequality follows since $n_y \geq 3$, implying that $\Delta x \leq \frac{1}{2}$.
\end{itemize}
Hence, it remains to show that $(\tilde y, \tilde Y) \in \T^{\rm bnd}_y(y,Y)$. Observe that $\tilde Y_{ij} = \tilde y_i \tilde y_j$ for $1 \leq i < j \leq n$ with $j > n_y$. Therefore, it suffices to consider the following two cases:
\begin{itemize}[leftmargin=*]
    \item $1\leq i < j < k \leq n_y$: in this case we need to show that $(\tilde y, \tilde Y)$  satisfies inequalities~\eqref{rr2}; substituting yields: $\frac{1+2\Delta x}{4}-\frac{(k-i)}{4}\Delta x = \frac{1}{4}(1+(2-k+i)\Delta x)\leq \frac{1}{4}$, where the last inequality follows since $k-i\geq 2$.
    \item $1 \leq i < j \leq n_y < k \leq n$: in this case we need to show that $(\tilde y, \tilde Y)$  satisfies
    $\frac{y_i}{2} + \frac{y_j}{2} + \frac{y_k}{4} - Y_{ij} -  \frac{Y_{ik}}{2} - \frac{Y_{jk}}{2} \leq \frac{1}{4}$.
    substituting yields:$\frac{1+2\Delta x}{6}+\frac{1}{8}-\frac{(j-i)}{8}\Delta x -\frac{1+2\Delta x}{12}= \frac{5}{24}+(\frac{1}{6}-\frac{j-i}{8})\Delta x\leq \frac{1}{4}$, where the inequality follows since $j-i \geq 1$ and $\Delta x \leq \frac{1}{2}$ since $n_y \geq 3$.
\end{itemize}
Hence, point~\eqref{ppp} is feasible for Problem~$\rm (MTcomb^{tri})$ with objective value equal to $\tilde \gamma$, implying $\gamma^* = \tilde \gamma$.
\end{proof}

\section{SDP relaxations}
\label{sec: sdp}

\paragraph{The basic approach.} SDP relaxations are among the most popular convex relaxations for nonconvex quadratically constrained quadratic programs~\cite{vb96}.
The basic idea is to lift the problem to a higher dimensional space by introducing new variables of the form
$X_{ij} = x_i x_j$ (resp. $Y_{ij} = y_i y_j$), for all $1\leq  i \leq j \leq n$, and
subsequently replace the nonconvex set $\{(x, X): X = x x^T\}$ (resp. $\{(y, Y): Y = y y^T\}$) by its convex hull
$\{(x, X): X \succeq xx^T\}$ (resp. $\{(y, Y): Y\succeq yy^T\}$).
It then follows that the following SDP provides an upper bound on the optimal value of Problem~\eqref{problemCP}:
\begin{align}\label{problemSDP1}\tag{SDP1}
{\rm max}   \quad & \gamma\nonumber\\
{\rm s.t.} \quad  & X_{ii}-2 X_{ij}+X_{jj}+Y_{ii}-2Y_{ij}+Y_{jj} \geq \gamma, \quad 1 \leq i < j \leq n\nonumber\\
       &  X \succeq xx^T, \; Y\succeq yy^T\nonumber\\
       &  X_{ii} \leq x_i, \; Y_{ii} \leq y_i,  \quad i \in [n]\label{diagcons}\\
      &  x \in [0,1]^n, \; y \in [0,1]^n.\nonumber
\end{align}

\vspace{-0.1in}
\paragraph{Tighter variable bounds}
As we will prove in Proposition~\ref{th3}, there exists an optimal solution of Problem~\eqref{problemSDP1} that satisfies the tighter bounds
on $x$ and $y$ given by~\eqref{bnds}.
Thus, the simple addition of these constraints to~\eqref{problemSDP1} does not change the optimal value.
However, inequalities~\eqref{bnds} can be utilized to strengthen~\eqref{problemSDP1} by replacing inequalities~\eqref{diagcons} with inequalities~\eqref{rev6}
and~\eqref{rev7}. We refer to the resulting SDP relaxation as Problem~(SDP2).

\vspace{-0.1in}
\paragraph{Order constraints.}
Consider the RLT-type inequalities~\eqref{RLTord} obtained by utilizing order constraints~\eqref{order}.
Let us denote by Problem~(SDP1ord), the SDP relaxation obtained by adding inequalities~\eqref{RLTord}
to~\eqref{problemSDP1}. We now show that Problem~(SDP1ord) has a more compact formulation.
Consider the set
\begin{equation*}
\S := \big\{ (x, X, \zeta): X_{ii}-2 X_{ij} + X_{jj} \geq \zeta_{ij}, \; X_{ii} \leq X_{ij}, \; x_i - X_{ij} \leq x_j - X_{jj}, \; 1 \leq i < j \leq n, X_{ii} \leq x_i, \; i\in [n] \big\}.
\end{equation*}
It can be checked that the projection of $\S$ onto $(x, \zeta)$ is given by $\{(x, \zeta): x_j - x_i \geq \zeta_{ij}, \; 1 \leq i < j \leq n, \; 0 \leq x_1 \leq \ldots \leq x_n \leq 1\}$,
and by proof of Lemma~\ref{lem2}, this set is the convex hull of $\{(x, \zeta): (x_j - x_i)^2 \geq \zeta_{ij}, \; 1 \leq i < j \leq n, \; 0 \leq x_1 \leq \ldots \leq x_n \leq 1\}$.
This in turn implies that the constraint $X \succeq x x^T$ in the description of Problem~(SDP1ord) is redundant and this problem can be equivalently written as:
\begin{align*}\label{problemSDPord}\tag{SDPord}
{\rm max}   \quad & \gamma\\
 {\rm s.t.} \quad  & x_j-x_i+Y_{ii}-2Y_{ij}+Y_{jj} \geq \gamma, \quad 1 \leq i < j \leq n\\
       &  Y\succeq yy^T, \; Y_{ii} \leq y_i,  \quad i\in [n]\\
       &  0 \leq x_1 \leq \ldots \leq x_n \leq 1, \quad y \in [0,1]^n.
\end{align*}

\vspace{-0.2in}
\paragraph{Best SDP relaxations.} We combine the two types of symmetry-breaking constraints to obtain the best SDP relaxations. Define $\I := \{(i,j): 1 \leq i < j \leq n_y\} \cup \{(i,j): n_y + 1 \leq i < j \leq n\}$.
Using inequalities~\eqref{semiord},  for each $(i,j) \in \I$, we generate the following RLT-type inequalities:
$$
X_{ii} \leq X_{ij}, \quad
 \frac{x_i}{2}-X_{ij} \leq \frac{x_j}{2} - X_{jj},  \;{\rm if} \; j \leq n_x,  \quad  x_i-X_{ij} \leq x_j - X_{jj}, \; {\rm if} \; j > n_x.
$$
Adding these inequalities to Problem~(SDP2), we obtain an SDP relaxation denoted by Problem~(SDPcomb).

\medskip
The next proposition provides optimal values of the first two SDPs introduced above.
Anstreicher~\cite{kurt09} conjectured these bounds and verified them
numerically for $3 \leq n \leq 50$ (see Conjectures~4 and~5 in~\cite{kurt09}).

\begin{proposition}\label{th3}
Consider the SDP relaxations of the circle packing problem defined above:
\begin{itemize}
\item [(i)] The optimal value of Problem~\eqref{problemSDP1} is $\gamma^* = 1+\frac{1}{n-1}$ for all $n \geq 2$.

\item [(ii)] The optimal value of Problem~(SDP2) is $\gamma^* =\frac{1}{4}(1+\frac{1}{{\lfloor(n-1)/4\rfloor}})$ for all $n \geq 5$.
\end{itemize}
Furthermore, addition of first-level RLT constraints do not improve the bounds given by (SDP1) and (SDP2).
\end{proposition}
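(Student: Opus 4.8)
The plan is to prove each optimal-value formula by the same two-sided scheme used in Propositions~\ref{th1} and~\ref{th2}: first derive a matching upper bound by aggregating a carefully chosen subset of the non-overlap constraints and invoking the semidefinite constraint, then exhibit an explicit feasible moment matrix attaining it. The two facts I will extract from feasibility of $(x,X)$ are: (a) the diagonal bound $X_{ii}\le x_i$ gives $\sum_i X_{ii}\le\sum_i x_i$; and (b) positive semidefiniteness $X\succeq xx^T$ gives $\sum_{i,j}X_{ij}\ge\big(\sum_i x_i\big)^2$ (test $X-xx^T\succeq 0$ against the all-ones vector). These are exactly the ingredients that let a quadratic averaging argument close, and they play the role that the summed clique/McCormick inequalities played in the LP proofs.

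For Part~$(i)$, summing the constraint $X_{ii}-2X_{ij}+X_{jj}+Y_{ii}-2Y_{ij}+Y_{jj}\ge\gamma$ over all $1\le i<j\le n$ yields
\[
\binom{n}{2}\gamma\le\Big(n\sum_i X_{ii}-\sum_{i,j}X_{ij}\Big)+\Big(n\sum_i Y_{ii}-\sum_{i,j}Y_{ij}\Big).
\]
Writing $s:=\sum_i x_i$ and applying (a)--(b), each parenthesized term is at most $ns-s^2\le n^2/4$, so $\binom{n}{2}\gamma\le n^2/2$, i.e. $\gamma\le 1+\tfrac1{n-1}$. For the reverse inequality I will test the symmetric point $x_i=y_i=\tfrac12$, $X_{ii}=Y_{ii}=\tfrac12$, $X_{ij}=Y_{ij}=\tfrac{n-2}{4(n-1)}$ for $i\ne j$: every non-overlap constraint then holds with equality at $\tfrac{n}{n-1}$, the diagonal bounds are tight, and $X-xx^T$ has eigenvalues $\tfrac{n}{4(n-1)}$ (multiplicity $n-1$) and $0$, so $X\succeq xx^T$. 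This certifies $\gamma^*=1+\tfrac1{n-1}$.

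Part~$(ii)$ repeats the argument on the index block $[n_y]$, where the sharper diagonal bound $X_{ii}\le x_i/2$ of~\eqref{rev6} applies and the leading $n_y\times n_y$ principal submatrix still dominates $x_{[n_y]}x_{[n_y]}^T$. Summing over $1\le i<j\le n_y$ and maximizing $\tfrac{n_y}{2}s-s^2$ over $s\ge0$ (optimum at $s=n_y/4$, value $n_y^2/16$) gives $\gamma\le\tfrac14\cdot\tfrac{n_y}{n_y-1}$, and I will invoke the arithmetic identity $n_y-1=\lceil\lceil n/2\rceil/2\rceil-1=\lfloor(n-1)/4\rfloor$ to put the bound in the stated form. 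Feasibility will be obtained by pushing the Part~$(i)$ optimizer for $n_y$ variables through the linear scaling~\eqref{mapping} and completing the outer indices $i>n_y$ with the rank-one values $X_{ij}=x_ix_j$, $x_i=\tfrac12$; the block structure makes $X-xx^T$ vanish outside the leading $n_y\times n_y$ block, so semidefiniteness reduces to that block, and all constraints with $j>n_y$ become slack.

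Finally, for the RLT claim I will check that both constructed optimizers already satisfy every first-level RLT inequality $X_{ij}\ge 0$, $X_{ij}\ge x_i+x_j-1$, $X_{ij}\le\min\{x_i,x_j\}$ (and their $y$-analogues): in Part~$(i)$ the off-diagonal value $\tfrac{n-2}{4(n-1)}$ lies in $[0,\tfrac12]$, and in Part~$(ii)$ the outer entries are exact products while the inner entries are nonnegative and below $\min\{x_i,x_j\}$. Since the upper-bound derivations never used any RLT inequality, adding these constraints leaves both optima feasible and cannot lower the bound, so the values are unchanged. I expect the semidefiniteness verifications of the explicit primal matrices—computing the spectrum of the rank-structured matrix in Part~$(i)$ and confirming that the rank-one off-diagonal completion preserves $X\succeq xx^T$ in Part~$(ii)$—to be the main technical obstacle, together with the floor/ceiling identity needed to normalize the Part~$(ii)$ bound.
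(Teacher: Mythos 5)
Your upper-bound derivations are correct and take a genuinely different, more elementary route than the paper: where the paper symmetrizes the SDP down to a bivariate problem and computes principal minors, you aggregate the non-overlap constraints and test $X-xx^T\succeq 0$ against the all-ones vector to get $\sum_{i,j}X_{ij}\geq(\sum_i x_i)^2$, which closes the bound by maximizing a univariate quadratic. This is clean, and your Part~$(i)$ certificate is exactly the paper's optimal point~\eqref{pointSDP}, with a correct eigenvalue check. The floor/ceiling identity $\lceil n/4\rceil-1=\lfloor(n-1)/4\rfloor$ you need in Part~$(ii)$ is also correct.

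However, your Part~$(ii)$ feasibility construction has a genuine gap. If you complete the outer indices so that $X-xx^T$ vanishes outside the leading $n_y\times n_y$ block, then necessarily $X_{ii}=x_i^2=\tfrac14$ for all $i>n_y$ (and likewise for $Y$), and for any pair $n_y<i<j\leq n$ the constraint reads
\[
X_{ii}-2X_{ij}+X_{jj}+Y_{ii}-2Y_{ij}+Y_{jj}=(x_i-x_j)^2+(y_i-y_j)^2=0\geq\gamma,
\]
which is violated since $\tilde\gamma>0$. So the claim that ``all constraints with $j>n_y$ become slack'' is false for the pure rank-one completion; those constraints are the ones that force you to inflate the outer diagonal entries. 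The paper's point~\eqref{point3} does exactly this: it keeps the cross and outer off-diagonal entries at the product values $x_ix_j$ but sets $\tilde X_{ii}=\tilde x_i/2=\tfrac14$ for $n_y<i\leq n_x$ and $\tilde X_{ii}=\tilde x_i=\tfrac12$ for $i>n_x$ (and $\tilde Y_{ii}=\tilde y_i=\tfrac12$ for all $i>n_y$), which adds a nonnegative diagonal perturbation to $X-xx^T$ (so positive semidefiniteness is preserved, block-diagonally) and makes every outer constraint evaluate to at least $\tfrac12\geq\tilde\gamma$. Your RLT verification for Part~$(ii)$ inherits this problem, since it is performed at the flawed point; once the diagonals are corrected as above, the RLT check goes through as in the paper.
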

\begin{proof}
\textbf{Part~$(i)$}. Problem~\eqref{problemSDP1} is symmetric in $(x,X)$ and $(y, Y)$. Since the feasible region of this problem is convex, it follows that
there exists an optimal solution with $x = y$ and $X = Y$. Consider $X_{ii} \leq x_i$, $i \in [n]$.
Clearly, at an optimal solution, at least one of these inequalities is binding, since otherwise it is possible to improve the objective value by increasing one of the diagonal entries of $X$.
We argue that at an optimal solution, all these inequalities are binding. To obtain a contradiction, suppose that this is not the case. Denote by $l$ the index of an inequality constraint that is not binding. Since the problem is symmetric in $(x_i, X_{ii})$, $i \in [n]$, it follows that there exists
an optimal solution with $X_{ll} < x_l$ for any
$l \in \{1,\ldots,n\}$. Since the feasible region of this problem is convex, by taking the average over all such solutions, we obtain an optimal solution of (SDP1) for which $X_{ii} < x_i$ for all $i \in [n]$, which is a contradiction. Thus, at an optimal solution $X_{ii} = x_{i}$, for all $i \in [n]$.
Using a similar symmetry argument, we deduce that at an optimal solution $X_{ii} -2 X_{ij} + X_{jj} = \frac{\gamma}{2}$
for all $1 \leq i < j \leq n$. Thus, Problem~\eqref{problemSDP1} simplifies to the following:
\begin{eqnarray}\label{ssdp}
&{\rm max}   \quad & \gamma\\
& {\rm s.t.}   & x_{i}-2 X_{ij}+x_{j} = \frac{\gamma}{2}, \quad 1 \leq i < j \leq n\nonumber\\
&       &  X \succeq xx^T, \quad 0 \leq x \leq 1\nonumber.
\end{eqnarray}
Next, we eliminate $X_{ij}$, $1\leq i <j \leq n$ using the equality constraints.
Define $\bar{X} = \bigl(\begin{smallmatrix}
1&x^T\\ x&\hat{X}
\end{smallmatrix} \bigr)$, where $\hat{X}_{ii} = x_i$, $i\in [n]$ and $\hat{X}_{ij} = \frac{1}{2}(x_i+x_j-\frac{\gamma}{2})$, $1 \leq i < j \leq n$.
Then Problem~(\ref{ssdp}) can be equivalently written as:
\begin{eqnarray}\label{ssdp2}
&{\rm max}   \quad & \gamma\\
& {\rm s.t.}     &  \bar{X} \succeq 0, \quad 0 \leq x \leq 1\nonumber.
\end{eqnarray}
Now consider a feasible solution Problem~\eqref{ssdp2} denoted by $(\tilde x, \tilde \gamma)$. Notice that any permutation of $\tilde x$,
denoted by $\tilde x_{\pi}$ results in a feasible solution of the form $(\tilde x_{\pi}, \tilde \gamma)$.
Since, the feasible set of~(\ref{ssdp2}) is convex, by taking the average of all such feasible points,
we obtain a feasible solution of the form $(\bar x, \tilde \gamma)$, where $\bar x_1 = \bar x_2 = \ldots= \bar x_n$.
Let $\bar x_i = t$ and let $\hat{X}_{ij} = z $. Then Problem~(\ref{ssdp2}) simplifies to a bivariate SDP:
\begin{eqnarray} \label{bisdp1}
&{\rm max}   \; & 4(t-z)\\
& {\rm s.t.}   & A :=
 \left(
\begin{array}{cccc}
t-t^2 & z-t^2 & \ldots &z-t^2\\
z-t^2 & t-t^2 & \ldots  &z-t^2\\
\vdots & \vdots & \ddots &\vdots\\
z-t^2 & z-t^2 &  \ldots& t-t^2
\end{array} \right) \succeq 0\nonumber\\
&       &  0 \leq t \leq 1, \; 0 \leq z \leq 1 \nonumber.
\end{eqnarray}
By direct calculation, it can be shown that the $m$th order principal minor of $A$ is given by:
$$M_m = (t-z)^{m-1} (t+(m-1)z-m t^2), 	\quad 2 \leq m \leq n.$$
Since the objective of~(\ref{bisdp1}) is to maximize $(t-z)$, we can assume that at an optimal solution $t-z > 0$.
Thus, $M_m \geq 0$ if and only if $t+(m-1)z-m t^2 \geq 0$, or equivalently,
$z \geq (m t^2-t)/(m-1)$. In addition, the right-hand side of this inequality is increasing in $m$ for any $t \in [0,1]$.
Thus, for a given $t \in [0,1]$, the matrix $A$ is positive semidefinite if  $z \geq \frac{n t^2-t}{n-1}$ and at the optimal solution
we have $z = \frac{n t^2-t}{n-1}$.
Thus, problem~(\ref{bisdp1}) simplifies to the following univariate optimization problem:
$$\max_{0\leq t \leq 1} \frac{4 n}{n-1} (t-t^2).$$
The optimal value of the above problem is attained at $t = \frac{1}{2}$ and is equal to $\gamma^* = 1+ \frac{1}{n-1}$, which is equal to the optimal value of Problem~\eqref{problemSDP1}. Moreover, an optimal solution of Problem~\eqref{problemSDP1} is attained at:
\begin{equation}\label{pointSDP}
x^*_i = y^*_i = X^*_{ii} = Y^*_{ii} = \frac{1}{2}, \; i\in [n], \quad
X^*_{ij} = Y^*_{ij} =\frac{n-2}{4(n-1)}, \; 1 \leq i < j \leq n.
\end{equation}
Finally, consider the first-level RLT inequalities defined in Remark~\ref{re1}. To show that~\eqref{pointSDP} satisfies these inequalities, it suffices to have $X^*_{ij} \geq x^*_i + x^*_j-1$ for all $1 \leq i < j \leq n$, which is clearly valid since $n \geq 2$.
Therefore the addition of RLT constraints to Problem~\eqref{problemSDP1} does not strengthen the relaxation.

\vspace{0.1in}
\noindent
\textbf{Part~$(ii)$.} Suppose $n \geq 5$ so that $n_y \geq 2$.
Consider a relaxation of Problem~(SDP2) that
only contains constraints of (SDP2) corresponding to the points in lower left quadrant of the unit square:
\begin{align*}\label{problemSDPr}\tag{SDPr}
{\rm max}   \quad & \gamma\\
 {\rm s.t.}   \quad & X_{ii}-2 X_{ij}+X_{jj}+Y_{ii}-2Y_{ij}+Y_{jj} \geq \gamma, \quad 1 \leq i < j \leq n_y\\
       &  X \succeq xx^T, \quad Y\succeq yy^T\\
    &  X_{ii} \leq \frac{x_i}{2}, \quad Y_{ii} \leq \frac{y_i}{2},  \quad i \in [n_y]\\
       &  0 \leq x_i \leq \frac{1}{2}, \quad 0 \leq y_i \leq \frac{1}{2}, \quad  i \in [n_y].
\end{align*}
Note that in the above problem, $X$ and $Y$ are $n_y\times n_y$ matrices.
The optimal value of Problem~\eqref{problemSDPr} is an upper bound on the optimal value of Problem~(SDP2).
The important property of Problem~\eqref{problemSDPr}, however, is its symmetry in $x$ and $y$ variables.
Thus, we can employ a similar line of arguments as in
Part~$(i)$, to deduce that
%
%
%
the optimal value of Problem~\eqref{problemSDPr} is $\tilde \gamma = \frac{1}{4}(1+\frac{1}{n_y-1})$.
We now construct a feasible point of Problem~(SDP2) whose objective equals $\tilde \gamma$, implying that $\gamma^*=\tilde \gamma$. Consider the point:
\begin{eqnarray}\label{point3}
\left.
\begin{array}{ll}
\tilde x_i=\tilde y_i=\frac{1}{4}, \; \forall \; i\in [n_y], \quad
\tilde x_i = \tilde y_i = \frac{1}{2}, \; \forall \; i = n_y+1,\ldots,n, \\
\tilde X_{ii} = \frac{\tilde x_i}{2}, \; \forall \; i\in [n_x], \quad
\tilde X_{ii} = \tilde x_i, \; \forall \; i = n_x+1,\ldots,n, \\
\tilde Y_{ii} = \frac{\tilde y_i}{2}, \; \forall \; i\in [n_y],\quad
\tilde Y_{ii} = \tilde y_i, \; \forall \; i = n_y+1,\ldots,n,\\
\tilde X_{ij} = \tilde Y_{ij} = \frac{n_y-2}{16(n_y-1)}, \;  \forall \; 1 \leq i <j \leq n_y, \\
\tilde X_{ij} = \tilde Y_{ij} = \frac{1}{8}, \; \forall \; 1 \leq i \leq n_y < j \leq n,\quad
\tilde X_{ij}= \tilde Y_{ij} = \frac{1}{4}, \; \forall \; n_y+1 \leq i < j \leq n,
\end{array}
\right.
\end{eqnarray}
It can be checked that $\tilde X_{ii}-2 \tilde X_{ij}+ \tilde X_{jj} + \tilde Y_{ii}-2 \tilde Y_{ij}+\tilde Y_{jj} = \tilde \gamma$
for $1 \leq i < j \leq n_y$, while
$\tilde X_{ii}-2 \tilde X_{ij}+ \tilde X_{jj} + \tilde Y_{ii}-2 \tilde Y_{ij}+\tilde Y_{jj} = \frac{1}{2} \geq \tilde \gamma$ for $1 \leq i < j \leq n_x$ with $j > n_y$, $\tilde X_{ii}-2 \tilde X_{ij}+ \tilde X_{jj} + \tilde Y_{ii}-2 \tilde Y_{ij}+\tilde Y_{jj} = \frac{3}{4} \geq \tilde \gamma$ for $1 \leq i \leq n_x < j \leq n$, and
$\tilde X_{ii}-2 \tilde X_{ij}+ \tilde X_{jj} + \tilde Y_{ii}-2 \tilde Y_{ij}+\tilde Y_{jj} = 1 \geq \tilde \gamma$, for $n_x < i < j \leq n$. Thus, it remains to
show $\tilde X-\tilde x \tilde x^T \succeq 0$ and
$\tilde Y - \tilde y \tilde y^T \succeq 0$.
Let $\bar{X} = \bigl(\begin{smallmatrix}
1&x^T\\ x& X
\end{smallmatrix} \bigr)$ and $\tilde{Y} = \bigl(\begin{smallmatrix}
1&y^T\\ y& Y
\end{smallmatrix} \bigr)$. To show $\bar{X}  \succeq 0$ at $(\tilde x, \tilde X)$ (resp. $\tilde{Y}  \succeq 0$ at $(\tilde y, \tilde Y)$),
it suffices to factorize $\bar{X}$ as $\bar{X}=L D^x L^T$ (resp. $\bar{Y}$ as $\bar{Y}=L' D^y L'^T$),
where $L$ (resp. $L'$) is a lower triangular matrix with ones in the diagonal and
$D^x ={\rm diag}(d^x)$, $d^x \in \R^n$ (resp. $D^y ={\rm diag}(d^y)$, $d^y \in \R^n$) is a nonnegative diagonal matrix.
That is, we need to find $L$ and $d^x$ such that
\begin{equation}\label{factor}
\bar X_{ij} = \sum_{k=1}^i{L_{ik} d^x_k L_{jk}}, \quad \forall 1\leq i \leq j \leq n.
\end{equation}
By direct calculation it can be checked that the following
choices satisfy equation~\eqref{factor}:
\begin{equation*}
d^x_j :=  \left\{
\begin{array}{ll}
 1, \quad & j=1\\
 \frac{1}{16}\prod_{i=1}^{j-2}\big(1-\frac{1}{(n_y-i)^2}\big), \; & 2 \leq j \leq n_y+1\\
 0, &  n_y+1 < j \leq n_x+1\\
 \frac{1}{4}, & {\rm otherwise},
\end{array} \right.
\quad
L_{ij} :=  \left\{
\begin{array}{ll}
 1,  & i = j \\
 \tilde x_{i-1}, \quad & j = 1, 2 \leq i \leq n+1\\
 \frac{-1}{(n_y-j+1)}, \; & 2 \leq j < i \leq n_y+1\\
 0, & {\rm otherwise }.
\end{array} \right.
\end{equation*}
%
%
Similarly, by defining $L' := L$, $d^y_i := d_i^x$ for $i \in [n_y+1]$,
and $d^y_i := \frac{1}{4}$, for $i = n_y+2, \ldots, n+1$, we deduce that $\bar{Y}=L' D^y L'^T$.
Hence, we have shown that~\eqref{point3}
is feasible for Problem~(SDP2) implying that the optimal value of this problem is given by $\gamma^* = \tilde \gamma$.

Finally, consider the first-level RLT constraints of Problem~(SDP2) obtained from those of Problem~(SDP1) under linear mappings~\eqref{mapping} and~\eqref{mapping2} for $(x,X)$ and $(y,Y)$, respectively. We show that~\eqref{point3} satisfies RLT inequalities of $(x,X)$ variables. The proof for RLT inequalities of $(y,Y)$ variables follows from a similar line of arguments. For each $1\leq i < j \leq n_x$, we have $X_{ij} \geq \frac{x_i}{2}+\frac{x_j}{2}-\frac{1}{4}$, substituting~\eqref{point3} gives $\frac{n_y-2}{16(n_y-1)} \geq \frac{1}{8}+\frac{1}{8}-\frac{1}{4}$, which holds since $n_y \geq 2$.  For each $1\leq i \leq n_x < j \leq n$, we have $X_{ij} \geq x_i+\frac{x_j}{2}-\frac{1}{2}$, substituting~\eqref{point3} gives $\frac{1}{8} \geq \frac{1}{4}+\frac{1}{4}-\frac{1}{2}$. For each $n_x < i < j \leq n$, we have $X_{ij} \geq x_i+x_j-1$, substituting~\eqref{point3} gives  $\frac{1}{4} \geq \frac{1}{2}+\frac{1}{2}-1$. Hence, adding first-level RLT
constraints to Problem~(SDP2) does not strengthen the relaxation.
\end{proof}

Since we are not able to solve Problem~(SDPord) and Problem~(SDPcomb) analytically, we perform numerical experiments to compare their strength with the proposed LP relaxations.
In Figure~\ref{f1a}, we compare the optimal value of Problem~(SDPord) with that of Problem~$(\rm MTord^{tri})$, as given by Part~$(iii)$ of Proposition~\ref{th2}, for $3 \leq n \leq 30$. As can be seen from the figure, while the SDP bounds are slightly stronger than the LP counterparts for $n > 13$, the relative gap between the two bounds is below five percent for all $n$.
In Figure~\ref{f1b},
we compare the the optimal value of Problem~(SDPcomb) with that of Problem~$(\rm MTcomb^{tri})$, as given by Part~$(iv)$ of Proposition~\ref{th2}, for $5 \leq n \leq 30$. As can be seen from the figure, while
the bounds given by the two relaxations coincide for $5 \leq n \leq 8$, the multi-row LP bounds are stronger than the SDP bounds for $9 \leq n \leq 30$.

\begin{figure}[htb]
    \centering
    \subfigure[]{\label{f1a}
    \epsfig{figure=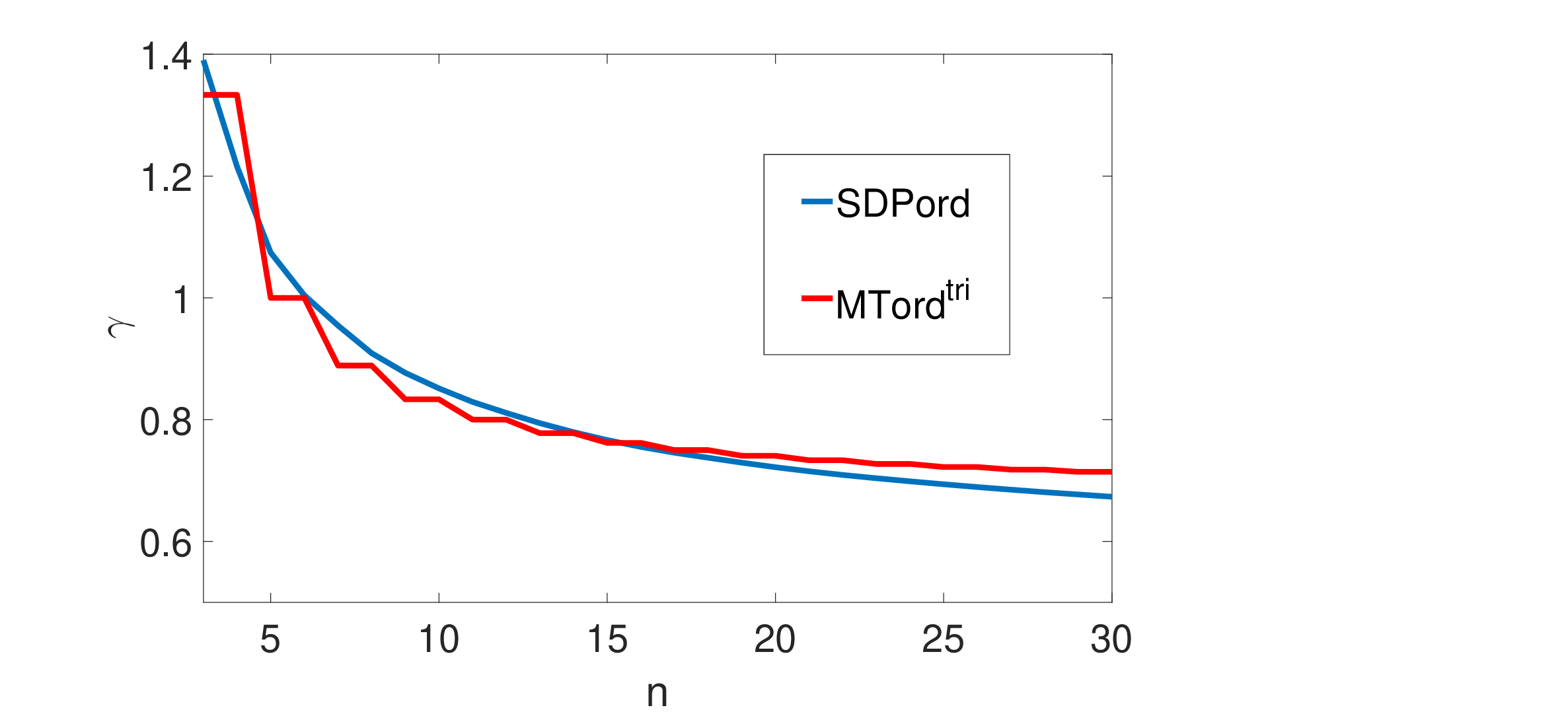, scale=0.25, trim=20mm 0mm 100mm 0mm,clip}}
    \subfigure[]{\label{f1b}
    \epsfig{figure=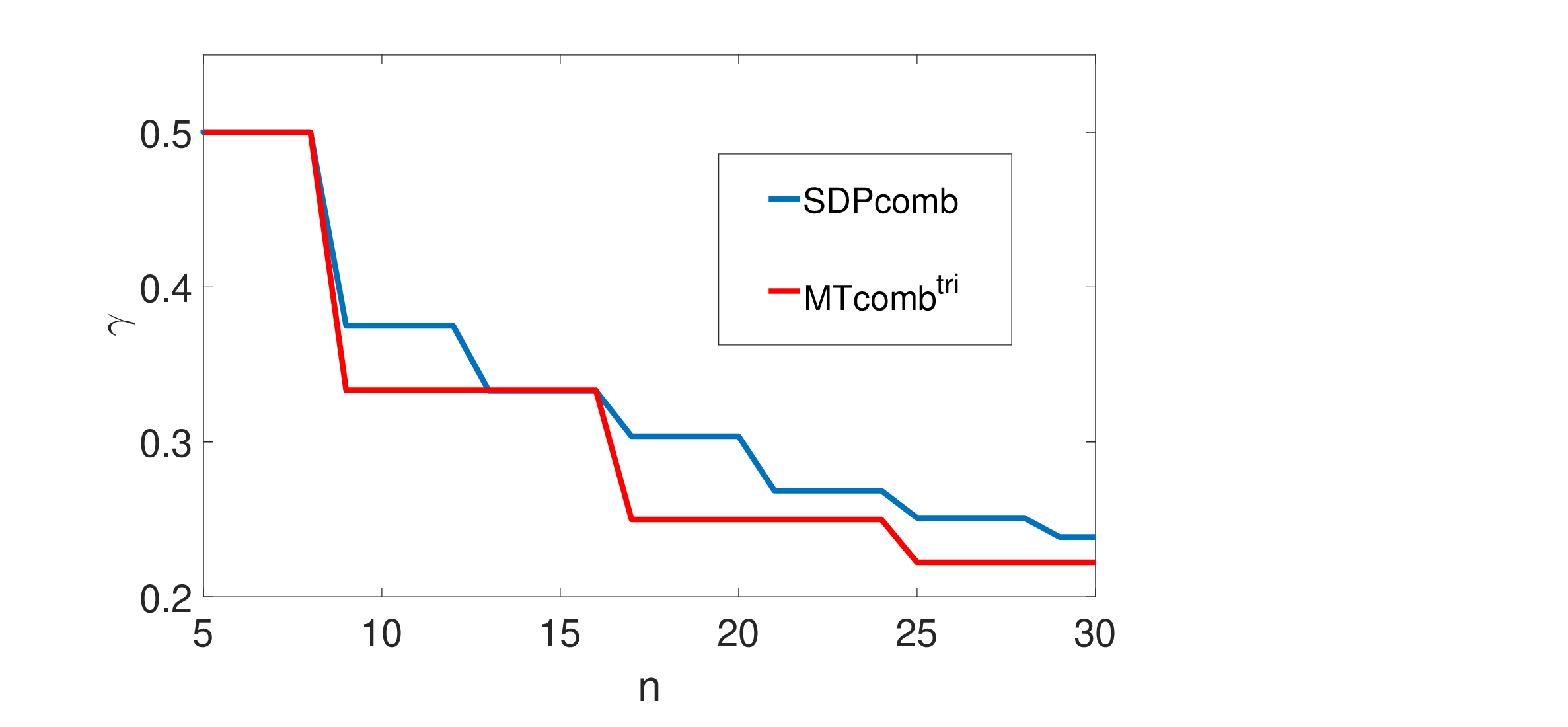, scale=0.25, trim=20mm 0mm 100mm 0mm,clip}}
    \caption{The upper bounds for the circle packing problem obtained by SDP versus LP relaxations. 
    }
    \label{f1}
\end{figure}

\begin{figure}[htb]
    \centering
    \epsfig{figure=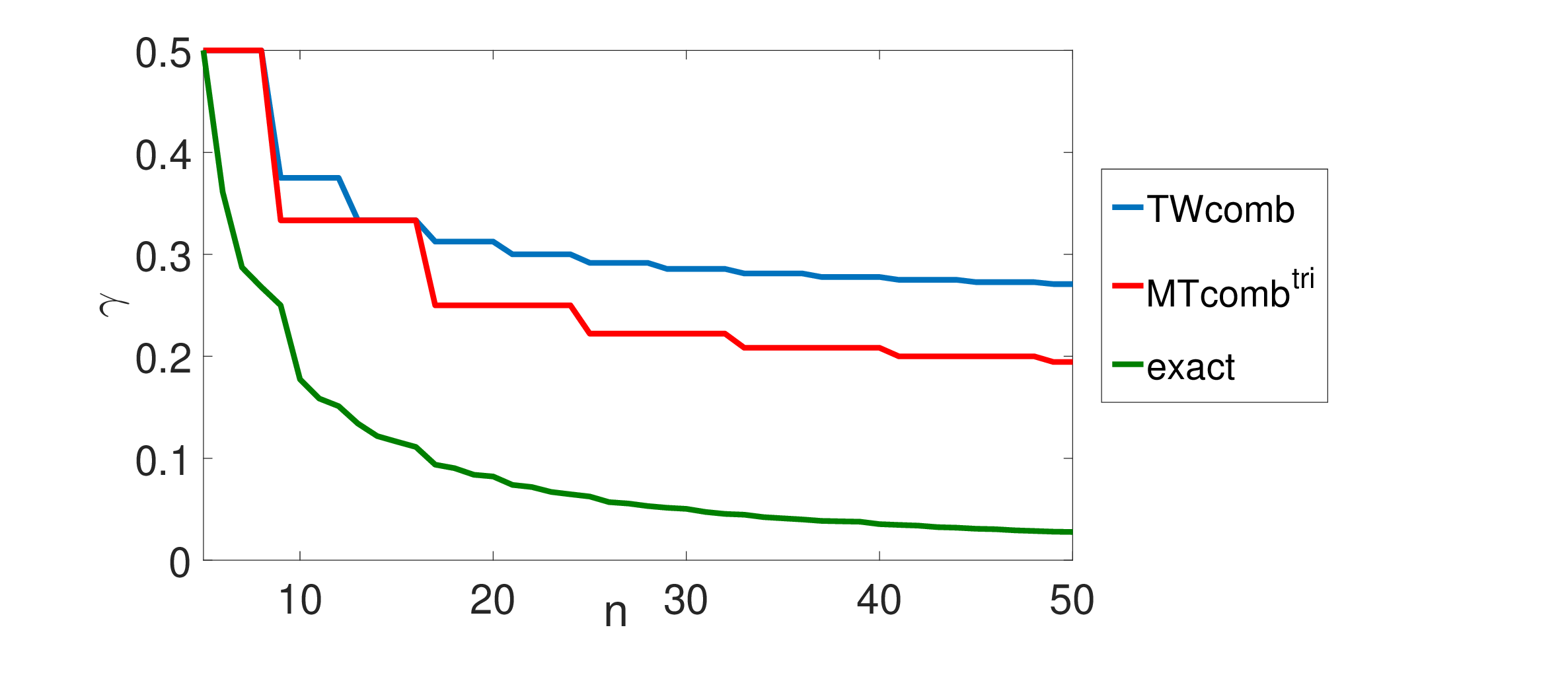, scale=0.25, trim=20mm 0mm 50mm 0mm,clip}
    \caption{The upper bounds obtained by the best single-row LP relaxation (TWcomb)
    and the best multi-row LP relaxation (${\rm MTcomb^{tri}}$) versus the optimal value of the circle packing problem (exact).}
    \label{f3}
\end{figure}

\vspace{-0.2in}
\section{Key takeaways}
\label{sec: conclude}

We conducted a theoretical assessment of several convexification techniques for the circle packing problem. Our main findings are stated in Propositions~\ref{th1}-\ref{th3}: from Propositions~\ref{th1} and~\ref{th3} it follows that $(i)$ the bound given by the basic SDP relaxation (\ie Problem~\eqref{problemSDP1}) is identical to that of the single-row LP relaxation with order constraints (\ie Problem~\eqref{problemTWord}), $(ii)$ the bound given by the SDP relaxation with tightened variable bounds (\ie Problem~(SDP2)) is identical to that of the best single-row LP relaxation (\ie Problem~(MTcomb$^{\rm tri}$)). In addition, via numerical experimentation (see Figure~\ref{f1b}), we observed that the upper bound given by the best multi-row LP relaxation is better than that of the best SDP relaxation. As the computational cost of solving aforementioned LP relaxations is lower than
SDP relaxations, we conclude that for the circle packing problem, LP relaxations are superior to SDP relaxations.

From Propositions~\ref{th1} and~\ref{th2} it follows that the proposed multi-row LP relaxations are considerably better than single-row LP relaxations.
In Figure~\ref{f3}, we plot the optimal values of the best single-row and multi-row LP relaxations along with the optimal value of Problem~(CP)
for $5 \leq n \leq 50$. The exact solutions of the circle packing problem are taken from {\tt www.packomania.com}.
As can be seen from Figure~\ref{f3}, by utilizing certain facets of the BQP, we are able to improve the quality of the upper bound on Problem~(CP) by about $30\%$ for large $n$. However, by increasing $n$, the quality of both upper bounds deteriorates quickly. This observation in turn explains the ineffectiveness of the existing techniques to convexify a nonconvex set defined by collection of non-overlapping constraints.


\begin{footnotesize}
\bibliographystyle{plain}
\bibliography{reference,aida}

\begin{thebibliography}{10}

\bibitem{kurt09}
K.~M. Anstreicher.
\newblock Semidefinite programming versus the reformulation-linearization
  technique for nonconvex quadratically constrained quadratic programming.
\newblock {\em Journal of Global Optimization}, 43(2-3):471--484, 2009.

\bibitem{cast08}
I.~Castillo, F.~J. Kampas, and J.~D. Pint{\'e}r.
\newblock Solving circle packing problems by global optimization: numerical
  results and industrial applications.
\newblock {\em European Journal of Operational Research}, 191(3):786--802,
  2008.

\bibitem{costa13}
A.~Costa, P.~Hansen, and L.~Liberti.
\newblock On the impact of symmetry-breaking constraints on spatial
  branch-and-bound for circle packing in a square.
\newblock {\em Discrete Applied Mathematics}, 161(1):96--106, 2013.

\bibitem{dl:97}
M.M. Deza and M.~Laurent.
\newblock {\em Geometry of cuts and metrics}, volume~15 of {\em Algorithms and
  Combinatorics}.
\newblock Springer-Verlag, Berlin Heidelberg New York, 1997.

\bibitem{IdaNick18}
A.~Khajavirad and N.~V. Sahinidis.
\newblock {A hybrid {LP/NLP} paradigm for global optimization relaxations}.
\newblock {\em Mathematical Programming Computation}, 10(3):383--421, May 2018.

\bibitem{mf14}
R.~Misener and C.A. Floudas.
\newblock {ANTIGONE}: algorithms for continuous/integer global optimization of
  nonlinear equations.
\newblock {\em Journal of Global Optimization}, 59(2-3):503--526, 2014.

\bibitem{Pad89}
M.~Padberg.
\newblock The boolean quadric polytope: Some characteristics, facets and
  relatives.
\newblock {\em Mathematical Programming}, 45:139--172, 1989.

\bibitem{sa99}
H.~D. Sherali and W.~P. Adams.
\newblock {\em {A Reformulation-Linearization Technique for Solving Discrete
  and Continuous Nonconvex Problems}}, volume~31 of {\em Nonconvex Optimization
  and its Applications}.
\newblock Kluwer Academic Publishers, Dordrecht, 1999.

\bibitem{szabo05}
P.~G. Szab{\'o}, M.~C. Mark{\'o}t, and T.~Csendes.
\newblock Global optimization in geometry--circle packing into the square.
\newblock In {\em Essays and Surveys in Global Optimization}, pages 233--265.
  Springer, 2005.

\bibitem{vb96}
L.~Vandenberghe and S.~Boyd.
\newblock {Semidefinite programming}.
\newblock {\em SIAM Review}, 38:49--95, 1996.

\bibitem{VigGle16}
S.~Vigerske and A.~Gleixner.
\newblock {SCIP}: Global optimization of mixed-integer nonlinear programs in a
  branch-and-cut framework.
\newblock Technical Report 16--24, ZIB, Berlin, 2016.

\end{thebibliography}
\end{footnotesize}

\end{document}